\DeclarePairedDelimiter\floor{\lfloor}{\rfloor}
\numberwithin{equation}{section} \setlength{\oddsidemargin}{.0001in}
\newcommand{\bean}{\begin{eqnarray*}}
\newcommand{\eean}{\end{eqnarray*}}
\newcommand{\be}{\begin{equation}}
\newcommand{\ee}{\end{equation}}
\newcommand{\bd}{\begin{displaymath}}
\newcommand{\ed}{\end{displaymath}}
\newcommand{\eps}{\varepsilon}
\newcommand{\beq}{\begin{equation}}
\newcommand{\eeq}{\end{equation}}
\newcommand{\bea}{\begin{eqnarray}}
\newcommand{\eea}{\end{eqnarray}}
\newcommand{\curl}{\mathrm{curl}\,}
\newcommand{\dive}{\mathrm{div}\,}
\newtheorem{lemma}{Lemma}[section]
\newtheorem{theorem}[lemma]{Theorem}
\newtheorem{corollary}[lemma]{Corollary}
\newcommand{\abs}[1]{\left\vert{#1}\right\vert}
\newcommand{\R}{\mathbb{R}}
\newtheorem{remark}[lemma]{Remark}
\newcommand{\e}{\varepsilon}
\newcommand{\norm}[1]{\left\Vert#1\right\Vert}
\newcommand{\C}{\mathbb{C}}
\newtheorem{Theorem}[lemma]{Theorem}
\begin{document}
\begin{frontmatter}
\title{A One-Dimensional Variational Problem for Cholesteric Liquid Crystals with Disparate Elastic Constants}
\author[label1]{Dmitry Golovaty}
\author[label2]{Michael Novack}
\author[label3]{Peter Sternberg}
\address[label1]{Department of Mathematics, The University of Akron, Akron, OH 44325}
\address[label2]{Department of Mathematics,
The University of Texas at Austin
2515 Speedway, RLM 8.100,
Austin, TX 78712}
\address[label3]{Department of Mathematics, Indiana University, Rawles Hall
831 East 3rd St.,
Bloomington, IN 47405}
\begin{keyword}
Cholesteric liquid crystal \sep Gamma-convergence \sep local minimizer
\end{keyword}
\begin{abstract}
    We consider a one-dimensional variational problem arising in connection with a model for cholesteric liquid crystals. The principal feature of our study is the assumption that the twist deformation of the nematic director incurs much higher energy penalty than other modes of deformation. The appropriate ratio of the elastic constants then gives a small parameter $\varepsilon$ entering an Allen-Cahn-type energy functional augmented by a twist term. We consider the behavior of the energy as $\varepsilon$ tends to zero. We demonstrate existence of the local energy minimizers classified by their overall twist, find the $\Gamma$-limit of the relaxed energies and show that it consists of the twist and jump terms. Further, we extend our results to include the situation when the cholesteric pitch vanishes along with $\varepsilon$.  
\end{abstract}
\end{frontmatter}
\section{Introduction}
We seek an understanding of the energy landscape for the one-dimensional variational problem
\begin{equation}
    \label{eq:vp}
    \inf_{\mathcal{A}_\alpha}E_\e(u),
\end{equation}
where $u:[0,1]\to\R^2$ so that $u=(u_1,u_2)$ with
\begin{eqnarray}
&&
E_\e(u_1,u_2)=\int_0^1 \frac{\e}{2}\abs{u'}^2+\frac{1}{4\e}(\abs{u}^2-1)^2+
\frac{L}{2}(u_1\,u_2'-u_2\,u_1'-2\pi N)^2\,dx,\label{Eeps}\\
&&\mbox{and} \nonumber\\
&&
\mathcal{A}_\alpha:=\{u\in H^1((0,1);\R^2):\;
u(0)=1,\; u(1)=e^{i\alpha}\},\label{bc}
\end{eqnarray}
for some positive integer $N$ and some $\alpha\in [0,2\pi)$

When convenient, as above, we will view $u=(u_1,u_2)$ as a map into $\C$. On occasion we will also find it convenient to use the following notation for the twist term:
\[
\mathcal{T}(u):=u_1\,u_2'-u_2\,u_1'.
\]

Our purpose in this article is to continue the analysis of a family of models with disparate elastic constants arising in the mathematics of liquid crystals \cite{GNS,GNS2,GNSV,GSV}. In particular, the problem \eqref{eq:vp} can be viewed as a highly simplified, relaxed version of the Oseen-Frank model for {\it cholesteric liquid crystals}, {\cite{bernardino2014structure,PhysRevLett.46.1216,ravnik,selinger,smalyukh,taylor2020gamma}} based on the elastic deformations of an $\mathbb{S}^1$- or $\mathbb{S}^2$-valued director $n$, cf. \cite{Virgabook}. Other models, of course, exist for nematic liquid crystals, including  the $Q$-tensor based Landau-de Gennes model, whose energy density consists of a bulk potential favoring either a uniaxial nematic state, an isotropic state, or both, depending on temperature, cf. \cite{MN}. We refer the reader to the recent literature \cite{GNS,MajZar} that establishes a precise asymptotic relationship between the Oseen-Frank and the Landau-de Gennes models.

 We recall now the form of the Oseen-Frank energy,
 \begin{align}\notag
F_{OF}(n):=&\int_\Omega\left(\frac{K_1}{2}(\dive n)^2 + \frac{K_2}{2}((\curl n) \cdot n+q)^2 +\frac{K_3}{2}|(\curl n )\times n|^2\right. \\ \label{FOF1}
&\left. \quad\quad +\frac{K_2+K_4}{2}(\textup{tr}\, (\nabla n)^2 - (\dive n)^2)\right)\,dx,
\end{align}
where $\Omega\subset\R^3$ represents the sample domain and the director $n$ maps 
$\Omega$ to $\mathbb{S}^2$. The material constants $K_1,K_2,K_3$ and $K_4$ are the elastic coefficients associated with the deformations of splay, twist, bend and saddle-splay, respectively \cite{Virgabook}. Most important for this article is the second term, the twist, where $q=\frac{2\pi}{p}$ with $p$ being the pitch of the cholesteric helix.  The distinction between nematic and cholesteric liquid crystals is manifested by the value of $q$. The liquid crystal is in a nematic state when $q=0$ and, absent boundary conditions, a global minimizer of $F_{OF}$ is a constant director field. On the other hand, a liquid crystal is in a cholesteric state whenever $q\neq0$ and global minimizers of $F_{OF}$ in $\mathbb R^3$ are rigid rotations of a uniformly twisted director field $n=(n_x,n_y,0)=e^{\frac{2\pi iz}{p}}$.

In \cite{GSV} we propose and analyze a model problem for nematic liquid crystals carrying a large energetic cost for splay. The model couples the Ginzburg-Landau potential to an elastic energy density with large elastic disparity, namely
\begin{equation}
\inf_{u\in H^1(\Omega;\R^2)}\frac{1}{2} \int_\Omega \left(\e|\nabla u|^2  +L(\dive u)^2 +\frac{1}{\e}(1-|u|^2)^2\right) \,dx.\label{divBBH}
\end{equation}
Here one should view $L$ as playing a role analogous to $K_1$ in \eqref{FOF1}. The minimization is taken over competitors satisfying an $\mathbb{S}^1$-valued Dirichlet condition on $\partial\Omega$ so as to avoid a trivial minimizer. This choice of potential clearly favors $\mathbb{S}^1$-valued states, 
which are a stand-in in our models for uniaxial nematic states. Analysis of \eqref{divBBH} in the $\e\to 0$ limit involves a `wall energy' along a jump set $J_u$ penalizing jumps of any $\mathbb{S}^1$-valued competitor $u$, and bulk elastic energy favoring low divergence. The conjectured $\Gamma$-limit of \eqref{divBBH} is
\beq
\frac{L}{2} \int_{\Omega} (\dive u)^2 \,dx + \frac{1}{6}\int_{J_{u}\cap \Omega} |u_+ - u_-|^3 \,d \mathcal{H}^1,\label{limdivBBH}
\eeq
where $u_+$ and $u_-$ are the one-sided traces of $u$ along $J_u$ which exhibit a jump discontinuity in their tangential components. 

The model considered in this paper is a cholesteric analog of the problem in \cite{GSV}. Just as the functional considered in \cite{GSV} can be viewed as a Ginzburg-Landau-type relaxation of the splay $K_1-$term in \eqref{FOF1}, the problem \eqref{eq:vp} can be understood as a similar relaxation of the twist $K_2-$term in the same energy. For example, in 2D this relaxation may take the form
\begin{equation}
    \label{eq:vp2}
    \inf_{\mathcal{A}}E^{2D}_\e(u),
\end{equation}
where $u:\Omega\to\R^3$ with
\begin{eqnarray}
&&
E^{2D}_\e(u)=\int_\Omega \frac{\e}{2}\abs{\nabla u}^2+\frac{1}{4\e}(\abs{u}^2-1)^2+
\frac{L}{2}(u\cdot\curl{u}-2\pi N)^2\,dx,\label{Eeps2}\\
&&\mbox{and} \nonumber\\
&&
\mathcal{A}:=\{u\in H^1(\Omega;\R^3):\;
u|_{\partial\Omega}=u_0\},\label{bc2}
\end{eqnarray}
for some domain $\Omega\subset\R^2$, some positive integer $N$ and boundary condition $u_0:\partial\Omega\to \mathbb{S}^2$. Results of simulations for the gradient flow dynamics associated with the problem \eqref{eq:vp2} lead to intricate textures, {such as that} shown in Fig.~\ref{fig}, resembling cholesteric fingerprint textures observed in experiments \cite{Outram}.

While attempting to tackle the problem \eqref{eq:vp2}, we found that the energy landscape in \eqref{eq:vp} is already rich enough to merit a separate investigation in one dimension that we undertake in this paper. We further assume that the component of $u$ along the axis of the twist vanishes so that the target space for the director is two-dimensional. Thus, though we will write $u=(u_1(x),u_2(x))$ what we really have in mind is $u=(0,u_2(x),u_3(x)).$  The thought experiment that allows us to impose this condition assumes that an electric field is applied along the axis of the twist and that the cholesteric has negative dielectric anisotropy that forces its molecules to orient perpendicular to the field, \cite{anisot}. 

{Existence and stability of minimizers for the three-component cholesteric director within the framework of the Oseen-Frank model in one dimension was considered in \cite{bedford2014global} and  \cite{Gartland2010} under the assumption that all elastic constants have comparable values. In addition, in \cite{Gartland2010}, the energy functional included the effects of an electric field. In the one-dimensional setting for highly disparate elastic constants,}  it turns out the inclusion of a third $x$-dependent component leads to an energy where distinguishing textures are lost for $\e\ll 1$ and the energy landscape becomes {highly degenerate}, see Remark \ref{thirdcomp}.  Thus, we find that the one-dimensional, two-component model \eqref{Eeps}  leads to stable states more reminiscent of those described above for the two-dimensional problem. 

\begin{figure}
\begin{center}
\includegraphics[scale=1]{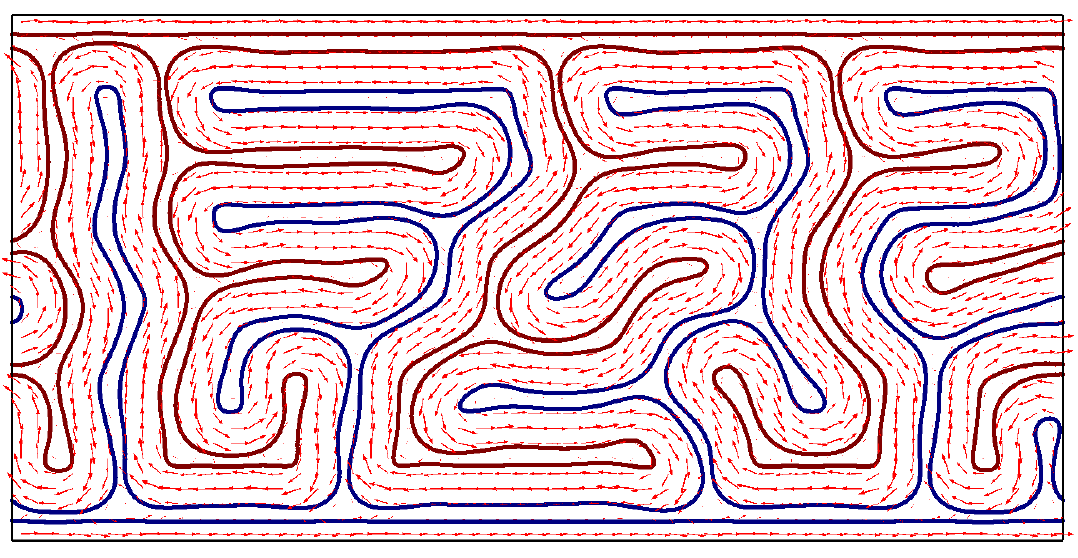}
\caption{Numerical solution for the gradient flow associated with \eqref{eq:vp2} obtained in COMSOL \cite{COMSOL}. The arrows represent the director $u$, the blue and the red curves are level sets $u_3=-0.92$ and $u_3=0.92$, respectively. The simulation was started from a uniform twist state with the axis of the twist oriented in a vertical direction. The director is assumed to be oriented to the right and to the left on the top and the bottom boundaries, respectively. Periodic boundary conditions are imposed on vertical components of the boundary. Here $N=10$, $L=1$, and $\e=0.005$.}\label{fig}
\end{center}
\end{figure}

The richness of the energy landscape is first revealed in Section 2 where the key result is Theorem \ref{constrainedmins}, showing that local minimizers of $E_\e$ exist for every positive integer value of twist--essentially for every winding number.

Section 3 contains our principal result of this investigation, namely that similar to our work on \eqref{divBBH} in \cite{GSV}, the $\Gamma-$limit $E_0$ given by \eqref{Ezero} of the relaxed energy $E_\e$ is the twist energy defined over $\mathbb S^1-$valued maps along with a jump energy, cf. Theorems \ref{gconv} and \ref{cpt}. One distinction, however, between our $\Gamma$-limit here and \eqref{limdivBBH} is that in the present study the jump cost, now associated with jumps in the phase, is impervious to the {\it size} of the jump. We demonstrate in Theorem \ref{e0mins} and Corollary \ref{threepos} that in certain parameter regimes depending on $L$ and $\alpha$, global energy minimizers with jumps are energetically favorable. Indeed, this is the most dramatic effect of the assumption of disparate elastic constants present in our model. The relatively expensive cost of twist leads the global minimizer of \eqref{eq:vp}, which of course is necessarily smooth, to rapidly change its phase, a process that can only be achieved with finite energetic cost by having the modulus simultaneously plunge towards zero.

In Section 4 we establish an energy barrier between the local minimizers of different winding numbers exposed in Theorem \ref{constrainedmins}, cf. Theorem \ref{thm:sublevel}. This readily leads to the existence of saddle points in Theorem \ref{MPT} via the Mountain Pass Theorem, thus filling out the energy landscape for $E_\e$.

Finally, in Section 5 we investigate the energy \eqref{tEeps} motivated by studies of so-called twist bend nematics, where twisting of the director occurs at much shorter scales than in cholesterics \cite{TwistBend}. Here we model this situation by tying the pitch (or the period of the twist) $1/N$ to the Ginzburg-Landau parameter $\e$ so that twisting ``averages out" in the limit $\e\to0$. We show in Theorem \ref{macro} that, in fact, the weak limit of uniformly energy bounded director fields is equal to zero but we are nonetheless able to recover some information about fine scale behavior of these fields. Then in Theorems \ref{gammablowup}
and \ref{cpt2} we establish $\Gamma$-convergence in this setting.

\section{Global and local minimizers that stay bounded away from zero}

We begin with the observation for problem \eqref{Eeps}-\eqref{bc} that a global minimizer exists for fixed $\e>0$.
\begin{Theorem}\label{GE}
For each fixed $\e>0$ there exists a minimizer of $E_\e$ within the class $\mathcal{A}_\alpha.$
\end{Theorem}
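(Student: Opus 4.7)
The plan is the standard direct method of the calculus of variations. First I would note that $\mathcal{A}_\alpha$ is non-empty (the smooth rotation $u(x)=e^{i\alpha x}$ lies in it) and that $E_\e \ge 0$, so $m := \inf_{\mathcal{A}_\alpha} E_\e$ is a finite, non-negative number. Pick a minimizing sequence $\{u^{(n)}\}\subset \mathcal{A}_\alpha$ with $E_\e(u^{(n)}) \to m$; we may suppose $E_\e(u^{(n)}) \le m+1$ for all $n$.

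Next I would extract uniform bounds and a limit. The gradient term gives a uniform $L^2$-bound on $(u^{(n)})'$, and combined with the fixed left endpoint $u^{(n)}(0)=1$ this yields a uniform $H^1((0,1);\R^2)$-bound by Poincar\'e's inequality. Passing to a subsequence (not relabeled), $u^{(n)}\rightharpoonup u$ weakly in $H^1$. Because $H^1((0,1))\hookrightarrow C([0,1])$ compactly in one space dimension, the convergence is in fact uniform on $[0,1]$, so $u(0)=1$ and $u(1)=e^{i\alpha}$, giving $u\in\mathcal{A}_\alpha$.

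It remains to verify lower semicontinuity of each term of $E_\e$. The Dirichlet term $\int_0^1\frac{\e}{2}|u'|^2\,dx$ is weakly lower semicontinuous on $H^1$ (being convex and continuous), and the potential term $\int_0^1\frac{1}{4\e}(|u|^2-1)^2\,dx$ is even continuous under the uniform convergence of $u^{(n)}$. The twist term is the only nontrivial one: it is quadratic in $u'$ with coefficients depending on $u$. Writing
\[
\mathcal{T}(u^{(n)}) - 2\pi N = u_1^{(n)}(u_2^{(n)})' - u_2^{(n)}(u_1^{(n)})' - 2\pi N,
\]
the coefficients $u_j^{(n)}$ converge to $u_j$ strongly in $C([0,1])$, while the derivatives $(u_j^{(n)})'$ converge weakly in $L^2(0,1)$. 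A standard strong$\times$weak$\to$weak product argument then yields $\mathcal{T}(u^{(n)}) - 2\pi N \rightharpoonup \mathcal{T}(u)-2\pi N$ weakly in $L^2$, and weak lower semicontinuity of the $L^2$-norm squared gives
\[
\int_0^1 \frac{L}{2}(\mathcal{T}(u)-2\pi N)^2\,dx \;\le\; \liminf_{n\to\infty}\int_0^1\frac{L}{2}(\mathcal{T}(u^{(n)})-2\pi N)^2\,dx.
\]
Summing the three inequalities gives $E_\e(u)\le \liminf E_\e(u^{(n)}) = m$, whence $u$ is a minimizer.

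The only point requiring even modest attention is the twist term, since it is not convex in $(u,u')$ and is a product of factors converging in different senses; the one-dimensional compact Sobolev embedding is exactly what makes the strong$\times$weak pairing work. Every other step is a textbook application of the direct method.
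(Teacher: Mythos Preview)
Your proof is correct and follows essentially the same route as the paper: extract a minimizing sequence, use the energy to get a uniform $H^1$ bound, pass to a weak $H^1$ limit with uniform convergence via the one-dimensional Sobolev embedding, and handle the twist term by the strong$\times$weak pairing to get weak $L^2$ convergence of $\mathcal{T}(u^{(n)})$ followed by weak lower semicontinuity of the $L^2$ norm. The paper's treatment of the twist term expands the square and checks the cross term separately, whereas you apply lower semicontinuity directly to $\|\mathcal{T}(u^{(n)})-2\pi N\|_{L^2}^2$; this is a cosmetic difference only.
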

\begin{proof}
Existence follows readily from the direct method as follows.
Suppressing the $\e$-dependence, let $\{u^j\}=\{(u_1^j,u_2^j)\}$ denote a minimizing sequence:
\[
E_\e(u_1^j,u_2^j)\to m:=\{\inf E_\e(u):\;u\in \mathcal{A}_\alpha\}.
\]
Compactness of a minimizing sequence follows from the immediate energy bounds
\[
\int_0^1\abs{u^j\,'}^2\,dx<C,\quad \int_0^1\abs{u^j}^4\,dx<C,\quad
\int_0^1\big(u_1^ju_2^j\,'-u_2^ju_1^j\,'\big)^2\,dx<C.
\]
So, in particular we have a uniform $H^1$-bound on $\{u^j\}$. Thus, up to subsequences, we get 
uniform (in fact Holder) convergence of $u^j\to \bar{u}=(\bar{u}_1,\bar{u}_2),$ and
$u^j\,'\rightharpoonup \bar{u}'$ weakly in $L^2((0,1))$ for some $\bar{u}\in \mathcal{A}_\alpha.$

Turning to the issue of lower-semicontinuity, we note that  verification for the first two terms in $E_\e$ is standard. For the third term we observe that 
\[
u_1^ju_2^j\,'-u_2^ju_1^j\,'\rightharpoonup \bar{u}_1\bar{u}_2\,'-\bar{u}_2\bar{u}_1\,'\;\mbox{weakly in}\;L^2,
\]
through the pairing of weak $L^2$ and uniform convergence.

Then we have
\begin{eqnarray*}&&\int_0^1 (u_1^j\,u_2^j\,'-u_2^j\,u_1^j\,'-2\pi N)^2\,dx=\\
&&\int_0^1 (u_1^j\,u_2^j\,'-u_2^j\,u_1^j\,')^2\,dx-4\pi N\int_0^1 \big(u_1^j\,u_2^j\,'-u_2^j\,u_1^j\,'\big)\,dx
+4\pi^2N^2.
\end{eqnarray*}

The middle term is continuous given the strong convergence of $u^j$ to $\bar{u}.$ For the first term, we appeal to the lower-semicontinuity of the $L^2$ norm under weak $L^2$ convergence. Thus, $E_\e(\bar{u})=m.$
\end{proof}

It turns out that characterization of the global minimizer in the case where $\alpha=0$, so that the boundary conditions are simply $u(0)=u(1)=1$, is much simpler than when $\alpha\in (0,2\pi)$. In particular, we have the following result.
\begin{Theorem}\label{bigzero}
Let $u_\e$ denote a global minimizer of $E_\e$ within the admissible class $\mathcal{A}_0$. Then $\rho_\e(x):=\abs{u_\e(x)}$ converges to $1$ uniformly on $[0,1]$ as $\e\to 0.$
\end{Theorem}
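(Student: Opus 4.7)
The plan is to produce an explicit competitor whose energy is $O(\e)$ and then bootstrap the resulting bounds on the minimizer $u_\e$ into uniform control of $\rho_\e$.

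First, I would test against the uniform-twist map $v(x) := e^{2\pi i N x}$, viewed as a map $[0,1]\to\R^2$. Since $N$ is a positive integer, $v(0) = v(1) = 1$, so $v \in \mathcal{A}_0$. The identity $|v|\equiv 1$ kills the potential term, and a direct computation gives $\mathcal{T}(v) \equiv 2\pi N$, which kills the twist term as well. Hence $E_\e(v) = \tfrac{\e}{2}(2\pi N)^2 = 2\pi^2 N^2 \e$, and by minimality
\[
E_\e(u_\e) \;\leq\; 2\pi^2 N^2 \, \e.
\]
From this I would harvest three consequences: (i) $\int_0^1 |u_\e'|^2\,dx \leq 4\pi^2 N^2$, which together with $u_\e(0) = 1$ gives a uniform $H^1$ bound and hence, by the one-dimensional embedding $H^1(0,1) \hookrightarrow C^{0,1/2}[0,1]$, uniform Holder-$1/2$ and uniform $L^\infty$ control on $u_\e$; (ii) $\int_0^1 (|u_\e|^2-1)^2\,dx \leq 8\pi^2 N^2 \e^2 \to 0$, so $\rho_\e^2 \to 1$ in $L^2(0,1)$; (iii) $\rho_\e$, as the composition of the $1$-Lipschitz map $|\cdot|$ with a uniformly Holder function, is itself uniformly Holder-$1/2$ on $[0,1]$, and the same holds for $\rho_\e^2$ (a product of uniformly bounded Holder functions).

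I would then upgrade the $L^2$ statement in (ii) to uniform convergence using (iii) via a standard Arzela-Ascoli-type argument: were $|\rho_{\e_k}^2(x_k) - 1| \geq \delta$ along some subsequence, the uniform Holder bound would force $|\rho_{\e_k}^2 - 1| \geq \delta/2$ on an interval around $x_k$ of length bounded below independently of $k$, contradicting the $L^2$ decay. Hence $\rho_\e^2 \to 1$ uniformly, and since $\sqrt{\cdot}$ is Lipschitz on $[1/2, 3/2]$ (which contains the range of $\rho_\e^2$ for $\e$ small), $\rho_\e \to 1$ uniformly on $[0,1]$. The only case-specific ingredient is the choice of $v$, which requires the compatibility $e^{2\pi i N} = 1$; this is precisely why the argument would break down for $\alpha \in (0, 2\pi)$ and is consistent with the paper's remark that the $\alpha = 0$ case is much simpler. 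I do not anticipate any genuine obstacle beyond finding this competitor.
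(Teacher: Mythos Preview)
Your proposal is correct and follows essentially the same approach as the paper: both use the competitor $v(x)=e^{2\pi i N x}$ to obtain the $O(\e)$ energy bound, extract a uniform $C^{0,1/2}$ bound on $\rho_\e$, and then argue by contradiction that a deviation $|\rho_\e(x_\e)-1|\geq\delta$ would persist on an interval of $\e$-independent length and force the potential term to exceed the available energy. The only cosmetic differences are that the paper bounds $\rho_\e$ in $H^1$ directly (rather than via $u_\e$) and compares against the energy bound rather than the equivalent $L^2$ decay of $\rho_\e^2-1$.
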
 
\begin{proof}
We proceed by contradiction and assume that for some $\delta>0$ there exists a sequence
  $\e_j\to 0$ and values $x_j\in [0,1]$ such that 
  \[\rho_{\e_j}(x_j)\leq 1-\delta.
  \]
   The case where  $\rho_{\e_j}(x_j)\geq 1+\delta$ is handled similarly.

 We begin with the observation that
\begin{equation}
E_\e(u_\e)\leq E_\e(e^{i2\pi Nx})=2(\pi N)^2\e.\label{easybd}
\end{equation}
It then follows that for some $C_0>0$ independent of $\e$ one has
\[
\int_0^1(\rho_\e')^2\,+\rho_\e^4\,dx<C_0,
\]
which in turn implies a bound of the form
\[
\norm{\rho_\e}_{H^1(0,1)}<C_1=C_1(C_0).\quad\mbox{Hence,}\quad \norm{\rho_\e}_{C^{0,1/2}(0,1)}<C_1.
\]

Then invoking the H\"older bound above, we have
\[
\abs{\rho_\e(x)-\rho_\e(x_j)}\leq C_1\abs{x-x_j}^{1/2}
\]
and so for $\abs{x-x_j}\leq \big(\frac{\delta}{2C_1}\big)^2$ one would have 
\[
\rho_\e(x)\leq \rho_\e(x_j)+C_1\abs{x-x_j}^{1/2}\leq 1-\frac{\delta}{2}.
\]
This in turn would imply
\begin{eqnarray*}
E_\e(u_\e)\geq\frac{1}{4\e}\int_0^1(\rho_\e^2-1)^2\,dx&&\geq \frac{1}{4\e}\int_{\left\{x:\,\abs{x-x_j}\leq \big(\frac{\delta}{2C_1}\big)^2\right\}}
(\rho_\e^2-1)^2\,dx\\
&&\geq \frac{\delta^4}{64 C_1^2\e}.
\end{eqnarray*}

This cannot hold in light of \eqref{easybd} for $\e< \e_0$ where 
\[
\e_0=\frac{\delta^2}{8\sqrt{2}C_1\pi N}.
\]
\end{proof}
\vskip.2in
\noindent
Next we turn to the construction of local minimizers of $E_\e$ within the class $\mathcal{A}_\alpha$ for $\alpha\in [0,2\pi)$. Like the global minimizers constructed for the case $\alpha=0$ in Theorem \ref{bigzero}, the modulus of these local minimizers will converge uniformly to $1$ as $\e\to 0$. 
\begin{Theorem}\label{constrainedmins}
For every positive integer $M$ and every $\alpha\in [0,2\pi)$, there exists an $\e_0>0$ such that for all $\e<\e_0$ there is an $H^1$-local minimizer $u_{\e,M}=\rho_{\e,M} e^{i\theta_{\e,M}}$ of $E_\e$ within the class $\mathcal{A}_\alpha$ such that
\begin{eqnarray}
&&
\limsup_{\e\to 0}\frac{\norm{{\rho_{\e,M}}-1}_{L^{\infty}(0,1)}}{\e}<\infty,
\label{suprho}\\
&&\nonumber\\
&&\lim_{\e\to 0}\theta_{\e,M}'= 2\pi M+\alpha\;\mbox{uniformly in}\;x\in [0,1],\label{thetaprime}\\
&&\mbox{and}\nonumber\\
 &&   
    \lim_{\e\to 0}E_\e(u_{\e,M})=\frac{L}{2}\left(2\pi(M-N)+\alpha\right)^2.\label{eq:ls}
    \end{eqnarray}
\end{Theorem}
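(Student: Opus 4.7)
The plan is to obtain $u_{\e,M}$ as a minimizer of $E_\e$ over a subclass of $\mathcal{A}_\alpha$ that fixes the winding number and imposes a pointwise lower bound on the modulus, and then to verify that the modulus constraint is inactive at this minimizer so that it is in fact an $H^1$-local minimizer of $E_\e$ in $\mathcal{A}_\alpha$.

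First I would fix $\delta \in (0,1/2)$ and set
\[
\mathcal{K}_{M,\delta} := \{u \in \mathcal{A}_\alpha : |u| \geq \delta \text{ on } [0,1],\ \theta(1) - \theta(0) = 2\pi M + \alpha\},
\]
where $\theta$ denotes the continuous lift of $u/|u|$, well-defined under the bound $|u| \geq \delta > 0$. The test map $u^*(x) := e^{i(2\pi M + \alpha)x}$ lies in $\mathcal{K}_{M,\delta}$ and satisfies
\[
E_\e(u^*) = \tfrac{\e}{2}(2\pi M + \alpha)^2 + \tfrac{L}{2}(2\pi(M - N) + \alpha)^2.
\]
Mimicking the direct method of Theorem \ref{GE}, a minimizing sequence converges weakly in $H^1$ and uniformly on $[0,1]$; uniform convergence preserves both the pointwise bound $|u| \geq \delta$ and the winding condition, so a constrained minimizer $u_{\e,M} \in \mathcal{K}_{M,\delta}$ exists with $E_\e(u_{\e,M}) \leq E_\e(u^*)$.

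Writing $u_{\e,M} = \rho e^{i\theta}$, I would then exploit the Euler–Lagrange system. The $\theta$-equation integrates to the conservation law
\[
\e\rho^2\theta' + L\rho^2(\rho^2\theta' - 2\pi N) = c_\e,
\]
for a constant $c_\e$, while the $\rho$-equation is
\[
-\e\rho'' + \e\rho(\theta')^2 + \tfrac{1}{\e}\rho(\rho^2 - 1) + 2L\rho\theta'(\rho^2\theta' - 2\pi N) = 0.
\]
The energy comparison gives $\|\rho^2 - 1\|_{L^2}^2 = O(\e)$ and $L^2$ control on $\rho^2\theta' - 2\pi N$; combined with the conservation law and a bootstrap, this upgrades to the uniform bound $\theta' = 2\pi M + \alpha + o(1)$, which in turn reduces the $\rho$-equation to $\rho^2 - 1 = \e\,\psi_\e$ with $\psi_\e$ uniformly bounded, yielding \eqref{suprho}. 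Claim \eqref{thetaprime} then follows, and substituting these asymptotics back into $E_\e$ produces \eqref{eq:ls}.

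For $H^1$-local minimality within $\mathcal{A}_\alpha$, observe that \eqref{suprho} implies $\rho_{\e,M} \geq 1 - C\e > \delta$ once $\e$ is small. Any $v \in \mathcal{A}_\alpha$ sufficiently close to $u_{\e,M}$ in $H^1$ is also close in $C^0$, so $|v| > \delta$ pointwise and $v$ has the same winding as $u_{\e,M}$; hence $v \in \mathcal{K}_{M,\delta}$ and $E_\e(v) \geq E_\e(u_{\e,M})$. The main technical obstacle I anticipate is upgrading the crude $L^2$ bound $\rho^2 - 1 = O(\sqrt{\e})$ coming from the energy to the pointwise rate $\rho - 1 = O(\e)$; the natural route is a maximum-principle argument for the scalar $\rho$-equation, exploiting the favorable sign of $\tfrac{1}{\e}\rho(\rho^2-1)$ together with the uniform bound on $\theta'$ furnished by the conservation law.
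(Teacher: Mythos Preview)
Your strategy is the same as the paper's: minimize $E_\e$ over a constrained class with a pointwise lower bound on the modulus and a prescribed winding, derive the conservation law for $\theta$, and then use the Euler--Lagrange information for $\rho$ together with a maximum-principle argument to show the modulus constraint is inactive and $\rho-1=O(\e)$.

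However, the order in which you plan to execute the steps hides a circularity. You write that the energy bound plus conservation law ``upgrades to the uniform bound $\theta'=2\pi M+\alpha+o(1)$'', and only afterwards use the $\rho$-equation to get $\rho-1=O(\e)$. But the conservation law only gives
\[
\theta'=\frac{2\pi NL\rho^{2}+C_\e}{L\rho^{4}+\e\rho^{2}},
\]
and with merely $\rho\ge\delta$ this is \emph{bounded}, not close to $2\pi M+\alpha$; the uniform limit for $\theta'$ genuinely requires $\rho\to 1$ uniformly first. The paper resolves this by reversing the order: it uses only the boundedness of $\theta'$ (which follows from $\rho\ge\delta$ and the formula above, after pinning down $C_\e$ via the $L^{2}$ estimate on $\rho^{2}-1$) to run the contradiction argument showing the constraint $\rho\ge\delta$ is inactive; only then does the full $\rho$-equation hold, the maximum principle yields $\|\rho-1\|_{L^\infty}=O(\e)$, and the uniform convergence of $\theta'$ follows.

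A related point: until you know the constraint is inactive, the ``$\rho$-equation'' you wrote down is only a variational inequality on the contact set $\{\rho=\delta\}$. The paper treats this explicitly as an obstacle problem, distinguishing an isolated contact point (where $C^{1,1}$ regularity lets one evaluate $\rho''$ and reach a sign contradiction) from a contact interval (where the inequality is tested against a nonnegative bump). Your maximum-principle sketch should be routed through this variational inequality rather than the equation.
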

\begin{remark}
We will find later that in some parameter regimes, corresponding to $\alpha$ small and $M=N$, these local minimizers turn out in fact to be global minimizers. However, when $M\not= N$ or when $M=N$ but $\alpha$ exceeds a critical value, they will not.
\end{remark}
\begin{proof}
   To capture these local minimizers we will rephrase our problem by switching to polar coordinates via the substitution
\[
u_1=\rho\cos\theta,\quad u_2=\rho\sin\theta.
\]
The boundary conditions corresponding to \eqref{bc} are
\begin{equation}
\rho(0)=1=\rho(1),\quad \theta(0)=0,\;\theta(1)=2\pi M+\alpha\quad\mbox{for some integer}\;M>0.\label{rtbc}
\end{equation}
We find that in these variables,
\[
E_\e=E_\e(\rho,\theta)=
\int_0^1 \frac{\e}{2}\big((\rho')^2+\rho^2(\theta')^2\big)+\frac{1}{4\e}(\rho^2-1)^2+
\frac{L}{2}(\rho^2\theta'-2\pi N)^2\,dx.
\]
We will minimize $E_\e(\rho,\theta)$ subject to \eqref{rtbc} via a constrained minimization procedure. To this end, 
for any number $\rho_0\in (0,1)$ we introduce the admissible class
 \begin{equation}
\mathcal{H}_{\rho_0}:=\{\rho\in H^1(0,1):\;\rho(0)=1=\rho(1),\;\rho(x)\geq \rho_0\;\mbox{on}\;[0,1]\}\label{mtH}
\end{equation}
and for any positive integer $M$ and any $\alpha\in [0,2\pi)$ we denote
\begin{equation}
\mathcal{H}_{M,\alpha}:=\{\theta\in H^1(0,1):\;\theta(0)=0,\;\theta(1)=2\pi M+\alpha\}.
\label{TM}
\end{equation}

We note that for each fixed $\e>0$ and $\rho_0\in (0,1)$, the direct method provides for a minimizing pair $({\rho_{\e,M}},{\theta_{\e,M}})$ to the constrained problem:
\begin{equation}
{\mu_{\e,M}}:=\inf_{\rho\in\mathcal{H}_{\rho_0},\,\theta\in\mathcal{H}_{M,\alpha}}E_\e(\rho,\theta).\label{Mmin}
\end{equation}
The only point to be made here is that the lower bound $\rho_j\geq \rho_0$ on a minimizing sequence $\{\rho_j,\theta_j\}$ allows for $H^1$ control of $\{\theta_j\}$. Also the $H^1$ control on $\{\rho_j\}$ yields uniform convergence of a subsequence so that the constraint is satisfied by the limiting ${\rho_{\e,M}}$. 

We remark for later use that ${\mu_{\e,M}}$ is bounded independent of $\e$ since 
\begin{equation}
{\mu_{\e,M}}\leq E_\e(1,(2\pi M+\alpha)x)=\frac{L}{2}\left(2\pi(M-N)+\alpha\right)^2+O(\e)\label{test}
\end{equation}

We will now argue that for any integer $M>0$ and any $\rho_0\in (0,1)$, these solutions to the constrained problem in fact satisfy $\rho_{\e,M}(x)>\rho_0$ for all $x\in [0,1]$ when $\e$ is sufficiently small. Hence, they correspond to $H^1$-local minimizers of $E_\e(u)$ subject to the boundary conditions \eqref{bc} since the representation $u_{\e,M}=\rho_{\e,M}e^{i\theta_{\e,M}}$ is global.

{\bf CLAIM:} For any positive integer $M$, any $\alpha\in [0,2\pi)$, and any $\rho_0\in  (0,1)$ we have
\begin{equation}
 \rho_{\e,M}(x)>\rho_0\;\mbox{for all}\;x\in [0,1]\quad\mbox{provided}\;\e\;\mbox{is sufficiently small.}\label{claim}
\end{equation}
To pursue this claim, we first observe that since the constraint falls only on ${\rho_{\e,M}}$, this minimizing pair $({\rho_{\e,M}},{\theta_{\e,M}})$ must satisfy
\begin{equation}
\lim_{t\to 0^+}\frac{E_\e\big(\rho_{\e,M}+tf,\theta_{\e,M}\big)-
E_\e\big(\rho_{\e,M},\theta_{\e,M}\big)}{t}
\geq 0,\label{rineq}
\end{equation}
for all $f\in H^1_0(0,1)$ such that $f(x)\geq 0$ on $[0,1]$,
and
\begin{equation}
\frac{d}{dt}_{t=0}E_\e\left({\rho_{\e,M}},{\theta_{\e,M}}+t\psi\right)= 0\quad\mbox{for all}\;\psi\in H^1_0(0,1).\label{tEL}
\end{equation}
Computing these quantities we find that \eqref{rineq} takes the form
\begin{multline}
\int_0^1\e{\rho^\prime_{\e,M}}f'+ \left(\e {\rho_{\e,M}}{\left({\theta^\prime_{\e,M}}\right)}^2+\frac{1}{\e}\left(\rho^2_{\e,M}-1\right){\rho_{\e,M}}\right.\\\left.-2L\left(2\pi N-{\rho^2_{\e,M}}{\theta^\prime_{\e,M}}\right){\rho_{\e,M}}{\theta^\prime_{\e,M}}\right)f\,dx\geq 0\label{rhoEL}
\end{multline}
for all nonnegative $f\in H^1_0(0,1)$, and \eqref{tEL} takes the form
\begin{equation}
\left[\left(\e\theta^\prime_{\e,M}-L\left(2\pi N-{\rho^2_{\e,M}}{\theta^\prime_{\e,M}}\right)  \right){\rho^2_{\e,M}}\right]'=0.
\end{equation}
Thus,
\begin{equation}
\big(\e{\theta^\prime_{\e,M}}-L(2\pi N-{\rho^2_{\e,M}}{\theta^\prime_{\e,M}}  ) \big){\rho^2_{\e,M}}=C_\e\quad\mbox{for some constant}\;C_\e,
\end{equation}
allowing us to solve for ${\theta^\prime_{\e,M}}$ to find
\begin{equation}
{\theta^\prime_{\e,M}}=\frac{2\pi N L{\rho^2_{\e,M}}+C_\e}{L{\rho^4_{\e,M}}+\e{\rho^2_{\e,M}}}.
\label{tMform}
\end{equation}
Integrating \eqref{tMform} over the interval $[0,1]$ and using the boundary conditions on ${\theta_{\e,M}}$ we obtain a formula for $C_\e$:
\begin{equation}
C_\e=\frac{ 2\pi M+\alpha-2\pi LN\int_0^1 (L{\rho^2_{\e,M}}+\e)^{-1}\,dx}{\int_0^1(L{\rho^4_{\e,M}}+\e{\rho^2_{\e,M}})^{-1}\,dx}.
\label{Ceqn}
\end{equation}
Now by \eqref{test},
\[
\int_0^1({\rho^2_{\e,M}}-1)\abs{{\rho^\prime_{\e,M}}}\,dx\leq\sqrt{2}\int_0^1 \frac{\e}{2}({\rho^\prime_{\e,M}})^2+\frac{1}{4\e}({\rho^2_{\e,M}}-1)^2 
\,dx\leq \sqrt{2}\,{\mu_{\e,M}}.
\]
Since ${\rho_{\e,M}}(0)=1$, it then follows from \eqref{test} and this total variation bound that ${\rho_{\e,M}}$ is bounded above uniformly in $\e$. Thus, by \eqref{Ceqn}, the same is true of $\abs{C_\e}$.

Next we use \eqref{tMform} to find that
\begin{eqnarray*}
&&{\theta^\prime_{\e,M}}-\bigg(\frac{2\pi NL+C_\e}{L+\e}\bigg)=
\frac{2\pi N L{\rho^2_{\e,M}}+C_\e}{L{\rho^4_{\e,M}}+\e{\rho^2_{\e,M}}}-
\bigg(\frac{2\pi NL+C_\e}{L+\e}\bigg)\\
&&
=\bigg(\frac{2\pi NL^2{\rho^2_{\e,M}}+C_\e\left[L(1+{\rho^2_{\e,M}})+\e\right]}{{\rho^2_{\e,M}}(L{\rho^2_{\e,M}}+\e)(L+\e)}\bigg)(1-{\rho^2_{\e,M}})=:\Lambda_\e(1-{\rho^2_{\e,M}})
\end{eqnarray*}
where $\abs{\Lambda_\e}\leq C=C(N,M,L)$ independent of $\e$ by the uniform bounds on $C_\e$ and ${\rho_{\e,M}}$.
Hence,
\begin{eqnarray}
&&\int_0^1\abs{{\theta^\prime_{\e,M}}-\bigg(\frac{2\pi NL+C_\e}{L+\e}\bigg)}\leq C\int_0^1 (1-{\rho^2_{\e,M}})\,dx\nonumber\\
&&\leq 2C\sqrt{\e}\left(\int_0^1\frac{1}{4\e}(1-{\rho^2_{\e,M}})^2\,dx\right)^{1/2}\leq 2C\sqrt{{\mu_{\e,M}}}\sqrt{\e}.\label{diff}
\end{eqnarray}

Since 
\[
2\pi M+\alpha=\int_0^1\left({\theta^\prime_{\e,M}}-\bigg(\frac{2\pi NL+C_\e}{L+\e}\bigg)\right)\,dx+
\frac{2\pi NL+C_\e}{L+\e}
\]
we can then invoke \eqref{diff} to conclude that
\begin{equation}
C_\e=2\pi L(M-N)+L\alpha+O(\sqrt{\e}).\label{Ceform}
\end{equation}
Substituting this back into \eqref{tMform} we find
\begin{equation}
{\theta^\prime_{\e,M}}=\frac{2\pi LM+L\alpha+2\pi LN({\rho^2_{\e,M}}-1)}{L\rho_{\e,M}^4+\e {\rho^2_{\e,M}}}+O(\sqrt{\e}).\label{beth}
\end{equation}

With these estimates we can now establish Claim \eqref{claim}.

In light of the boundary conditions, we need only consider $x\in (0,1).$ First, suppose by contradiction, that $\{x:\,{\rho_{\e,M}}=\rho_0\}$ contains an isolated point $x_0\in (0,1)$.
Since the obstacle in \eqref{Mmin} is smooth, it follows from standard regularity theory of obstacle problems 
(see e.g. \cite{PSU}) that ${\rho_{\e,M}}$ makes $C^{1,1}$ contact with the obstacle $y(x)\equiv 1$. However, we also have that
${\rho_{\e,M}}$ satisfies the Euler-Lagrange equation on either side of $x_0$, that is, 
\begin{equation}
\e\rho^{\prime\prime}_{\e,M}= \e {\rho_{\e,M}}({\theta^\prime_{\e,M}})^2+
\frac{1}{\e}({\rho^2_{\e,M}}-1){\rho_{\e,M}}-2L(2\pi N-{\rho^2_{\e,M}}{\theta^\prime_{\e,M}}){\rho_{\e,M}}{\theta^\prime_{\e,M}}\label{rhoELeqn}
\end{equation}
cf. \eqref{rhoEL}. Consequently the limits $x\to x_0^+$ and $x\to x_0^-$ agree for ${\rho^{\prime\prime}_{\e,M}}(x)$ so we find
that in fact ${\rho_{\e,M}}\in C^2$ in a neighborhood of $x_0$ with 
\[
{\rho^{\prime\prime}_{\e,M}}(x_0)=\e ({\theta^\prime_{\e,M}}(x_0))^2
+\frac{1}{\e}(\rho^2_0-1)\rho_0-2L(2\pi N-{\theta^\prime_{\e,M}}(x_0)){\theta^\prime_{\e,M}}(x_0).
\]
Invoking \eqref{beth} evaluated at $x=x_0$, we see \begin{equation}{\theta^\prime_{\e,M}}\sim \frac{2\pi M+\alpha+    2\pi N(\rho_0^2-1)}{\rho_0^4}+   O(\sqrt{\e})\label{thetaMN}
\end{equation} so that
\begin{equation}
{\rho^{\prime\prime}_{\e,M}}(x_0)\sim 
\frac{1}{\e}(\rho^2_0-1)\rho_0+O(1)\label{conrho}
\end{equation}
But since ${\rho_{\e,M}}$ has a minimum at $x_0$, this contradicts the requirement that ${\rho^{\prime\prime}_{\e,M}}(x_0)\geq 0$ when $\e$ is sufficiently small.

Next we suppose by way of contradiction that $\{x:\rho_{\e,M}=\rho_0\}$ contains an interval $I\subset [0,1]$. Fix a smooth non-negative function $f$ compactly supported in $I$. Then by \eqref{rhoEL} we must have
\[
\int_I\bigg(\e ({\theta^\prime_{\e,M}})^2
+\frac{1}{\e}(\rho^2_0-1)\rho_0-2L(2\pi N-{\theta^\prime_{\e,M}}){\theta^\prime_{\e,M}}\bigg)f\,dx\geq 0,
\]
again leading to a contradiction for $\e$ small. Claim \eqref{claim} is established and the local minimality of $u_{\e,M}$ follows.

We remark in passing that for the case $M<N$, one can establish the stronger statement that in fact $\rho_{\e,M}(x)>1$ for all $x\in (0,1)$ by choosing $\rho_0=1$ in the definition of the constrained set \eqref{mtH}. Then the same contradiction argument works with \eqref{thetaMN} replaced by
\[{\theta^\prime_{\e,M}}\sim 2\pi M+\alpha+O(\sqrt{\e})\] and \eqref{conrho}
replaced by \[
{\rho^{\prime\prime}_{\e,M}}(x_0)\sim -2 L\big(2\pi(N-M)-\alpha\big)\big(2\pi M+\alpha\big)+O(\sqrt{\e}).
\]

Finally, in light of the uniform in $\e$ bound on $\theta_{\e,M}'$ provided by \eqref{beth}, we observe that for any fixed values of $M$ and $N$,  the minimizing ${\rho_{\e,M}}$ must satisfy \eqref{suprho}, since otherwise, a presumed maximum of ${\rho_{\e,M}}$ at $x_0$ that is bigger than $1$ or a presumed minimum that is less than $1$ would violate \eqref{rhoELeqn}. Then applying \eqref{suprho} to \eqref{beth}, we obtain \eqref{thetaprime} as well.
We then may conclude that 
\begin{eqnarray*}
\liminf_{\e\to 0}
E_\e(\rho_{\e,M},\theta_{\e,M})&\geq&
\liminf_{\e\to 0}\frac{L}{2}\int_0^1  (\rho_{\e,M}^2\theta_{\e,M}'-2\pi N)^2\,dx\\
&=&\frac{L}{2}\left(2\pi(M-N)+\alpha\right)^2,
\end{eqnarray*}
and so \eqref{eq:ls} follows, in view of \eqref{test}.
\end{proof}

\section{$\Gamma$-convergence of $E_\e$}\label{sec:gamma}
As we shall see, the local minimizers described in Theorem \ref{constrainedmins} are also global minimizers only in certain parameter regimes. In order to fill out the characterization of global minimizers in all parameter regimes, we will turn to the machinery of $\Gamma$-convergence.

Our candidate for a limiting functional will be infinite unless
$u\in H^1((0,1)\setminus J;S^1)$ where $J$ is a finite collection of points, say  $0< x_1<x_2<\ldots<x_k<1$ for some non-negative integer $k$, along with perhaps $x=0$ and/or $x=1$ depending on whether or not the traces of $u$ satisfy the desired boundary conditions inherited from $E_\e$; that is, we include $x=0$ in $J$ only if $u(0^+)\not=1$ and we include $x=1$ in $J$ only if $u(1^-)\not=e^{i\alpha}$. 
For such a $u$ we will assume $J$ is the minimal such set of points, meaning that if any point in $J\cap (0,1)$ were eliminated, the function $u$ would no longer represent an $H^1$ function in the compliment of the smaller set of points. In particular, if $u\in H^1((0,1))$ and has the proper traces, then $J=\emptyset.$

Then we define $E_0:L^2\big((0,1);\R^2\big)\to\R$ via
\begin{align}
\label{Ezero}
E_0(u) := \left\{ 
\begin{array}{cc}
\displaystyle     \frac{L}{2}\int_0^1\displaystyle (u_1\,u_2'-u_2\,u_1'-2\pi N)^2\,dx + \frac{2\sqrt{2}}{3}\mathcal{H}^0(J)     &\mbox{if}\; u\in H^1((0,1)\setminus J;S^1) \\ \\
+\infty & \mbox{ otherwise}.
\end{array}
\right.
\end{align}
Here $\mathcal{H}^0$ refers to zero-dimensional Hausdorff measure, i.e. counting measure.

Then we claim:
\begin{theorem}\label{gconv}
$\{E_\e\}$ $\Gamma$-converges to $E_0$ in $L^2\big((0,1);\R^2\big)$.
\end{theorem}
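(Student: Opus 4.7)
The plan is to establish both halves of $\Gamma$-convergence in $L^2\big((0,1);\R^2\big)$: a liminf inequality $\liminf_{\e\to 0}E_\e(u_\e)\ge E_0(u)$ whenever $u_\e\to u$ in $L^2$, and an explicit recovery sequence achieving $\limsup_{\e\to 0}E_\e(u_\e)\le E_0(u)$ for every $u$ with $E_0(u)<\infty$. Throughout the liminf argument I would assume without loss of generality that $\liminf E_\e(u_\e)<\infty$ and pass to a subsequence attaining the liminf.

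For the liminf, the first step is to extract the limit structure. The potential bound $\int_0^1 (|u_\e|^2-1)^2\,dx\le 4\e\,E_\e(u_\e)\to 0$ combined with $L^2$ convergence gives $|u|=1$ a.e. Setting $\rho_\e:=|u_\e|$, the inequality $|u_\e'|^2\ge (\rho_\e')^2$ together with Modica's pointwise bound
\[
\frac{\e}{2}(\rho_\e')^2+\frac{1}{4\e}(1-\rho_\e^2)^2\ge \frac{|1-\rho_\e^2|}{\sqrt{2}}|\rho_\e'|=\big|F(\rho_\e)'\big|,\qquad F(\rho):=\int_0^\rho \frac{|1-s^2|}{\sqrt{2}}\,ds,
\]
yields a uniform $BV$ bound on $F(\rho_\e)$, with $F(1)-F(0)=\sqrt{2}/3$. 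The real obstacle is proving that each jump of $u$ forces $\rho_\e$ to dip arbitrarily close to zero (not merely below $1$), so that the $BV$ estimate produces the sharp jump cost $2\sqrt{2}/3$ per jump. The driving mechanism is the coupling with the twist term: at a putative jump point $x_0$ with distinct traces $u_-\ne u_+$, suppose toward a contradiction that $\rho_\e\ge \delta$ on a fixed neighborhood $(x_0-r,x_0+r)$. A continuous phase $\theta_\e$ then exists on that neighborhood and must change by at least the minimum $S^1$-angle $\Delta>0$ between $u_-$ and $u_+$. Choosing, via $u_\e\to u$ a.e.\ on a further subsequence, points $y_1^\e<x_0<y_2^\e$ with $u_\e(y_i^\e)$ close to $u_\mp$ and $y_2^\e-y_1^\e\to 0$, a Cauchy--Schwarz bound produces
\[
\int_{y_1^\e}^{y_2^\e}\big(\rho_\e^2\theta_\e'-2\pi N\big)^2\,dx\gtrsim \frac{\delta^4\Delta^2}{y_2^\e-y_1^\e}-O(y_2^\e-y_1^\e)\longrightarrow\infty,
\]
contradicting the uniform twist bound. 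Hence $\rho_\e$ attains values below any prescribed $\delta>0$ in every neighborhood of each $x_0\in J$ for small $\e$; combined with $\rho_\e\to 1$ in measure, the $BV$ estimate then delivers Modica--Mortola energy at least $2(F(1-\delta)-F(\delta))$ on disjoint neighborhoods of each jump point. Summing over $k$ jumps and then letting $\delta\to 0$ gives $\liminf_\e \mathrm{MM}\ge \frac{2\sqrt{2}}{3}\,\mathcal{H}^0(J)$, with $\mathcal{H}^0(J)<\infty$ forced by the energy bound. Away from $J$, $\rho_\e$ is bounded below on compact subsets, so the twist bound upgrades to an $H^1$ bound on the local phase, giving $u\in H^1_{\mathrm{loc}}\big((0,1)\setminus J;S^1\big)$; weak $L^2$ convergence of $\mathcal{T}(u_\e)$ on these good regions (as in the proof of Theorem~\ref{GE}) then produces $\liminf\frac{L}{2}\int(\mathcal{T}(u_\e)-2\pi N)^2\,dx\ge\frac{L}{2}\int_0^1(\mathcal{T}(u)-2\pi N)^2\,dx$, and adding completes the liminf.

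For the recovery sequence, given $u\in H^1\big((0,1)\setminus J;S^1\big)$ with $J=\{x_1,\dots,x_k\}$ (where $0$ or $1$ is included if the Dirichlet data fails), I would choose $\tau_\e\to\infty$ with $\e\tau_\e\to 0$, set $u_\e:=u$ on the complement of $\bigcup_i I_i^\e$, where $I_i^\e:=(x_i-\e\tau_\e,x_i+\e\tau_\e)$, and inside each $I_i^\e$ use the standard scalar Modica--Mortola optimal profile $q_\e$ for the double well $\tfrac14(s^2-1)^2$ (with $q_\e(0)=0$, $q_\e(\pm\tau_\e)\to 1$, and transition cost $\sqrt{2}/3$ per half) to define
\[
u_\e(x):=q_\e\!\Big(\tfrac{|x-x_i|}{\e}\Big)\begin{cases} u(x_i^-) & \text{if } x\in(x_i-\e\tau_\e,x_i),\\ u(x_i^+) & \text{if } x\in(x_i,x_i+\e\tau_\e),\end{cases}
\]
with a short $C^1$ transition at $x_i\pm\e\tau_\e$ to glue into $u$. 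On each half of $I_i^\e$ the field $u_\e$ is parallel to a fixed vector in $S^1$, so $\mathcal{T}(u_\e)\equiv 0$ there and the twist contribution from $I_i^\e$ is $O(\e\tau_\e)\to 0$, while the Modica--Mortola contribution is $\frac{2\sqrt{2}}{3}+o(1)$ per jump. Off $\bigcup_i I_i^\e$, the equality $u_\e=u$ yields $|u_\e|=1$, so the potential vanishes, the gradient part $\frac{\e}{2}\int|u'|^2\,dx\to 0$ by $H^1$-integrability of $u$, and the twist term converges to $\frac{L}{2}\int_0^1(\mathcal{T}(u)-2\pi N)^2\,dx$. Summing yields $\limsup E_\e(u_\e)\le E_0(u)$, as required.
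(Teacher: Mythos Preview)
Your recovery sequence is essentially the paper's construction, and your jump-cost lower bound is correct and in fact takes a slightly different route: rather than first locating the ``bad intervals'' of $\rho_\e$ and showing they collapse to a finite set containing $J$, you argue directly that each genuine jump of the limit $u$ forces $\rho_\e$ to dip arbitrarily close to zero, via a twist-energy blowup. That contradiction argument is valid once fleshed out.

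The genuine gap is in your lower bound for the twist term. You assert that ``away from $J$, $\rho_\e$ is bounded below on compact subsets,'' and then use this to lift, obtain an $H^1$ bound on the phase, and pass to the limit in $\mathcal{T}(u_\e)$. That assertion is false in general. The sequence $\rho_\e$ can dip to zero (or near zero) at points where the limit $u$ is perfectly smooth: for instance, $u_\e$ may undergo a rapid phase change of $2\pi$ in a shrinking neighborhood of some $x_1\notin J$ while $\rho_\e$ plunges to zero there; the $L^2$-limit $u$ then sees no jump at $x_1$, yet your lower bound on $\rho_\e$ fails on any compact set containing $x_1$. This is precisely the phenomenon flagged in Remark~\ref{smallerJ}. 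Without that lower bound you cannot define a lifting on $K$, you lose the $H^1$ control on $\theta_\e$, and the weak-$L^2$ convergence of $\mathcal{T}(u_\e)$ to $\mathcal{T}(u)$ is not available.

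The paper circumvents this by working not with $J$ but with a (possibly larger) finite set $\tilde{J}^q$ determined by where $\rho_\e$ actually dips below a threshold $1-2^{-q}$. On compact sets away from $\tilde{J}^q$ one has $\rho_\e\ge 1-2^{-q}$ by construction, which yields the phase bounds and the estimate
\[
\int_K(\rho_\e^2\theta_\e'-2\pi N)^2\,dx\ge \int_K (1-2^{-q})^4(\theta_\e')^2-4\pi N\rho_\e^2\theta_\e'+4\pi^2N^2\,dx,
\]
after which one passes to the weak limit and finally sends $q\to\infty$. Your argument can be repaired along exactly these lines, but as written the twist lower bound does not go through.
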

We also have the following compactness result.
\begin{theorem}\label{cpt}
If $\{u_\e \}_{\e>0}$ satisfies 
\begin{equation}\label{energybound}
E_\e(u_\e)\leq C_0 <\infty,
\end{equation}
then there exists a function $u\in H^1((0,1)\setminus J';S^1)$ where $J'$ is a finite, perhaps empty, set of points in $(0,1)$ such that along a subsequence $\e_{\ell}\to 0$ one has
\begin{equation}\label{modcon}
u_{\e_\ell} \to u\;\mbox{in}\;L^2\big((0,1);\R^2\big).
\end{equation}
Furthermore, writing $u(x)=e^{i\theta(x)}$ for $\theta\in H^1((0,1)\setminus J')$, we have that for every compact set $K\subset\subset (0,1)\setminus J'$, there exists an $\e_0(K)>0$ such that for every $\e_\ell<\e_0$ one has $\abs{u_{\e_\ell}}>0$ on $K$ and there is a lifting whereby $u_{\e_\ell}(x)=\rho_{\e_\ell}(x)e^{i\theta_{\e_\ell}(x)}$ on $K$, with
\begin{equation}
\theta_{\e_{\ell}}\rightharpoonup \theta\;\mbox{weakly in}\;H^1_{loc}\big((0,1)\setminus J'\big).\label{weakcon}
\end{equation}

\end{theorem}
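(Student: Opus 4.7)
The plan is to combine the Modica-Mortola structure of the first two terms of $E_\e$, which localizes the failure of $|u_\e|$ to approach $1$ into a finite ``bad set,'' with the twist bound, which controls the phase away from that set.

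\textbf{Localization of the bad set.} Setting $\rho_\e:=|u_\e|$, Kato's inequality $|\rho_\e'|\le|u_\e'|$ and \eqref{energybound} yield
$$
\int_0^1 \frac{\e}{2}(\rho_\e')^2+\frac{1}{4\e}(\rho_\e^2-1)^2\,dx\le C_0,
$$
and AM-GM applied pointwise produces a uniform total-variation bound on $\Phi(\rho_\e)$, where $\Phi(r):=\int_0^r|s^2-1|/\sqrt{2}\,ds$. Fix $\delta\in(0,1/2)$ and call a connected component of $\{\rho_\e\le 1-\delta\}$ \emph{deep} if $\rho_\e$ attains a value $\le\delta$ on it. The standard Modica-Mortola one-dimensional lower bound per ``there-and-back'' transition of $\rho_\e$ forces each deep component to contribute at least a fixed $c_\delta>0$ to $E_\e(u_\e)$, so the number of deep components is at most $C_0/c_\delta$, uniformly in $\e$. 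Extracting a subsequence $\e_\ell\to 0$, one may assume the number of deep components is constant and that their closures converge in Hausdorff distance to finitely many points; let $J'\subset (0,1)$ denote the interior limit points.

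\textbf{Lifting and weak $H^1_{loc}$ convergence of the phase.} Given a compact set $K\subset\subset(0,1)\setminus J'$, for all sufficiently small $\e_\ell$ the set $K$ is disjoint from every deep component of $\{\rho_{\e_\ell}\le 1-\delta\}$, so $\rho_{\e_\ell}\ge 1-\delta$ on $K$ and a continuous lift $u_{\e_\ell}=\rho_{\e_\ell}e^{i\theta_{\e_\ell}}$ exists on $K$, normalized at some fixed base point. The twist bound then gives
$$
\int_K(\theta_{\e_\ell}')^2\,dx \le \frac{1}{(1-\delta)^4}\int_K(\rho_{\e_\ell}^2\theta_{\e_\ell}')^2\,dx \le C(\delta,L,N,C_0),
$$
since $\rho_{\e_\ell}^2\theta_{\e_\ell}'$ is $L^2$-bounded by writing it as $(\rho_{\e_\ell}^2\theta_{\e_\ell}'-2\pi N)+2\pi N$ and invoking \eqref{energybound}. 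A diagonal argument over an exhaustion of $(0,1)\setminus J'$ by compact sets produces $\theta\in H^1_{loc}\big((0,1)\setminus J'\big)$ with $\theta_{\e_\ell}\rightharpoonup\theta$ as in \eqref{weakcon}.

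\textbf{Strong $L^2$ convergence.} Along a further subsequence, $\theta_{\e_\ell}\to\theta$ a.e.\ on $(0,1)\setminus J'$. The potential bound yields $\int(\rho_{\e_\ell}^2-1)^2\,dx\le 4\e_\ell C_0\to 0$, so $\rho_{\e_\ell}\to 1$ in $L^2$ and a.e., giving $u_{\e_\ell}\to u:=e^{i\theta}$ a.e.\ on the full-measure set $(0,1)\setminus J'$. Uniform $L^4$ control of $u_{\e_\ell}$ (again from the potential term) supplies equi-integrability, and Vitali's theorem upgrades a.e.\ convergence to \eqref{modcon}.

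\textbf{Main obstacle.} The most delicate step is the first: showing that the set of locations where $\rho_\e$ dips substantially below $1$ is genuinely finite, not just sparse, and that subsequential limits of such locations cluster at only finitely many points. This rests on the sharp Modica-Mortola wall-energy lower bound $2\sqrt{2}/3$ per full $\rho=1\to 0\to 1$ transition — the very constant entering the definition \eqref{Ezero} of $E_0$ — together with careful accounting that shallow excursions (where $\rho_\e$ stays above $\delta$) can be absorbed without destroying the lift in Step 2.
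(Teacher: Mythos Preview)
Your argument is correct and mirrors the paper's proof closely: bound the number of deep excursions of $\rho_\e$ via the Modica--Mortola wall energy, collapse them along a subsequence to a finite set $J'$, lift and control $\theta_\e'$ on the complement using the twist term, then diagonalize over an exhaustion. One slip to correct: disjointness of $K$ from the \emph{deep} components yields only $\rho_{\e_\ell}>\delta$ on $K$, not $\rho_{\e_\ell}\ge 1-\delta$, since shallow components of $\{\rho_\e\le 1-\delta\}$ may still meet $K$; the constant in your $H^1$ bound on $\theta_{\e_\ell}'$ should therefore be $\delta^{-4}$, which is exactly what the paper obtains (with $\delta=2^{-q}$) and is harmless for the rest of the argument. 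For the final $L^2$ convergence the paper instead uses an $L^\infty$ bound on $\rho_\e$ (from the BV estimate) together with a splitting over $K$ and $K^c$, but your Vitali/equi-integrability route via the uniform $L^4$ bound is an equally valid alternative.
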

\begin{remark}\label{smallerJ}
It is not necessarily the case that $J'$ is minimal for $u$; that is, it can happen that $u\in H^1((0,1)\setminus J;S^1)$ for some proper subset $J\subset J'$ and in that case it is the minimal such set $J$ which one uses to evaluate the $\Gamma$-limit $E_0$ at $u$.  However, one cannot guarantee the validity of \eqref{weakcon} with $J'$ replaced by such a minimal $J$. For example, in a neighborhood of, say, $x=1/2$ whose size shrinks with $\e$, an energy-bounded sequence $\{u_\e\}$ could undergo a rapid jump in phase by $2\pi$ while the modulus of $u_\e$ plunges to zero--or even stays positive but very small-- in this neighborhood. Then the limiting $u$ could have well-behaved lifting across $x=1/2$ while for all $\e>0$, the function $u_\e$ would not. 
\end{remark}

\begin{remark}\label{thirdcomp}
The appearance of a jump set contribution to the $\Gamma$-limit $E_0$ is associated with the cost of a Modica-Mortola type transition layer for the modulus from value $1$ down to $0$ and back, accompanied by a rapid shift in the phase. If one instead considers a three-component model for $u=(u_1(x),u_2(x),u_3(x))$ then such a phase shift can be achieved with asymptotically vanishing cost by plunging $u_2(x)^2+u_3(x)^2$ to zero while compensating with $u_1(x)$ to keep $\abs{u}\approx 1$. This apparently leads to an absence of local minimizers with such meta-stable states eventually `melting' under a gradient flow to global minimizers given asymptotically by \eqref{onejump} of Theorem \ref{e0mins}  below. In fact, the degeneracy in such a three-component model is worse than just this: If one introduces cylindrical coordinates so that $(u_1,u_2,u_3)=(\rho\cos\theta,\rho\sin\theta,u_3)$ and then one writes $\rho=\cos\phi$ and $u_3=\sin\phi$ for some angle $\phi(x)$, a three-component version of $E_\e$ would take the form
\[
\frac{1}{2}\int_0^1 \e \phi'(x)^2+\e\theta'(x)^2+
L\big(\cos^2\phi(x)\theta'(x)-2\pi N\big)^2\,dx.
\]
Note then that for $\e$ small there is no control on $\phi'$, nor is there control on $\theta'$ when $\phi\approx \pi/2$.
\end{remark}

We now present the proofs of Theorem \ref{gconv} and Theorem \ref{cpt}. We will begin with the proof of Theorem \ref{cpt} since elements of it will be called upon in the proof of Theorem \ref{gconv}.
\begin{proof}[Proof of Theorem \ref{cpt}]
We fix an integer $q\geq 2$ and consider a sequence satisfying \eqref{energybound}. Denoting $\rho_\e:=\abs{u_\e}$, since $u_\e$ is $H^1$, we have that $\rho_\e$ is continuous and we may define the open sets 
\begin{equation}\notag
\mathcal{I}_\e := \{y \in [0,1]: \rho_\e(y) >1-2^{-q} \}.
\end{equation}
As open sets on the real line, each is a countable disjoint union of open intervals
\begin{equation}\notag
\mathcal{I}_\e = \cup_{m=1}^\infty \mathcal{I}_m^\e = \cup_{m=1}^\infty (a_m^\e, b_m^\e),
\end{equation}
with
\begin{equation}\notag
\rho_\e (a_m^\e) = \rho_\e (b_m^\e) =1-2^{-q}.
\end{equation}
Note that by the energy bound \eqref{energybound},
\begin{equation}\label{setconv}
\mathbbm{1}_{\mathcal{I}_\e} \to \mathbbm{1}_{(0,1)} \textup{ in }L^1((0,1)).
\end{equation}
Now we consider the open sets
\begin{equation}\notag
(0,1)\setminus \overline{\mathcal{I}}_\e = \mathring{\mathcal{I}}_\e^c.
\end{equation}
and similarly decompose $\mathring{\mathcal{I}}_\e^c$ into a countable union of intervals
\begin{equation}\notag
\cup_{m=1}^\infty (b_m^\e, a_{m+1}^\e)
.\end{equation}
Now some of the intervals $(b_m^\e, a_{m+1}^\e)$ could contain a point $c_m^\e$ such that 
\begin{equation}\notag
\rho(c_m^\e) = 2^{-q},
\end{equation}
and we collect those intervals and label them $(b_{m_j}^\e, a_{m_j+1}^\e)$, where $j$ belongs to an index set $S_\e$. A priori $S_\e$ could be finite or infinite. Let $B_\e$ be the union of these ``bad intervals." These are the intervals over which it is possible that a limit of $u_\e$ exhibits a jump discontinuity. We first prove that the number of these intervals is finite and bounded uniformly in $\e$. We observe that
\begin{align}\notag
C_0 &\geq  \int_{B_\e} \frac{\e}{2}\abs{u_\e'}^2+\frac{1}{4\e}(\abs{u_\e}^2-1)^2 \,dx\\ \notag
&\geq \sum_{j\in S_\e} \int_{b_{m_j}^\e}^{c_{m_j}^\e}\frac{\e}{2}(\rho_\e')^2 +\frac{1}{4\e}(\rho_\e^2-1)^2 \,dy +\int_{c_{m_j}^\e}^{a_{{m_j}+1}^\e}\frac{\e}{2}(\rho_\e')^2 +\frac{1}{4\e}(\rho_\e^2-1)^2 \,dy \\ \notag
&\geq \sum_{j\in S_\e}\int_{b_{m_j}^\e}^{c_{m_j}^\e}\frac{|\rho_\e'||\rho_\e^2-1|}{\sqrt{2}} \,dy +\int_{c_{m_j}^\e}^{a_{{m_j}+1}^\e}\frac{|\rho_\e'||\rho_\e^2-1|}{\sqrt{2}} \,dy \\ 
&\geq \sum_{j\in S_\e} \sqrt{2}\int_{2^{-q}}^{1-2^{-q}} |z^2-1|\,dz \label{mmest}.
\end{align}
Rearranging \eqref{mmest} yields an estimate on the size of $S_\e$:
\begin{equation}
\mathcal{H}^0(S_\e) \leq \left(\sqrt{2}\int_{2^{-q}}^{1-2^{-q}} |z^2-1|\,dz \right)^{-1}C_0.\label{notalot}
\end{equation}
Next, on $(0,1) \setminus B_\e$, we observe that
$\rho_\e \geq 2^{-q},$
which allows us define a lifting of $u_\e$ as $\rho_\e e^{i\theta_\e}$ and to find a positive constant $C_1$ such that 
\begin{align}\notag
\int_{(0,1)\setminus B_\e} (\theta_\e')^2 \, dy &\leq C_1 + C_1 \int_{(0,1)\setminus B_\e} \frac{L}{2} ( \rho_\e^2\theta_\e'-2\pi N)^2\,dy \\ 
&\leq C_1 + C_1 E_\e(u_\e)\leq C_1+C_1C_0 < \infty \label{h1}.
\end{align}
On each of the (finitely many) intervals comprising $(0,1)\setminus B_\e$ we may choose our lifting such that the value of $\theta_\e$ at, say, the left endpoint of the interval lies in $[0,2\pi)$ and from the fundamental theorem of calculus and Cauchy-Schwarz it then follows from \eqref{h1} that $\norm{\theta_\e}_{L^{\infty}((0,1)\setminus B_\e)}$ is bounded uniformly in $\e$ by a constant depending on $C_0$ and $C_1$. Consequently, we have a bound of the form 
\begin{equation}
\norm{\theta_\e}_{H^1((0,1)\setminus B_\e)}<C_2,\label{unih1}
\end{equation}
for some constant $C_2$ independent of $\e$.

Now we are going to obtain a subsequence of $\e$ approaching zero along which the bad intervals converge to a finite set of points. To this end, we start with the sequence of all the endpoints of the left-most subinterval in $B_\e$ and extract a subsequential limit, calling it $x_1$. Then, along this subsequence of $\e's$, we move on to the left endpoints of the second subinterval of $B_\e$, and passing to a further subsequence, arrive at a limit point $x_2$, etc. In light of \eqref{notalot}, this procedure generates a finite number of points $x_1<x_2\ldots<x_k$ in $[0,1]$. (If this procedure ever yields $x_j=x_{j+1}$ then we drop $x_{j+1}$ from this list.) In this manner, we arrive at a subsequence, $\e_\ell\to 0$ such that:
\begin{equation}\notag
\mathcal{H}^0(S_{\e_\ell}) \textup{ is independent of }\ell \textup{ and equal to some fixed  }k\in \mathbb{N},
\end{equation}
and, in light of \eqref{setconv}, the subintervals of $B_{\e_{\ell}}$ collapse to these $k$ points as $\e_\ell \to 0$; that is
\begin{equation}
B_{\e_\ell}\to J':= \{x_1,x_2,\ldots,x_k\}\;\mbox{as}\;\e_\ell \to 0.\label{Bcollapse}
\end{equation}. 

If we then fix any finite union of closed intervals $K_1 \subset\subset [0,1] \setminus J'$, it follows from \eqref{Bcollapse} that
\begin{equation}\label{empty}
K_1 \cap B_{\e_\ell} = \emptyset
\end{equation}
for $\e<\e_0$ with $\e_0=\e_0(K_1)$ small enough. Therefore, $u_{\e_\ell}$ has a lifting on the various intervals comprising $K_1\cap B_{\e_\ell}$ and invoking \eqref{unih1}, we have, after passing to a further subsequence, (with notation suppressed) that
\begin{equation}
\theta_{\e_\ell} \rightharpoonup \theta \quad \textup{in }H^1(K_1),\;\theta_{\e_\ell} \to \theta\quad\mbox{in}\;
L^2(K_1) \label{2con}
\end{equation}
for some $\theta\in H^1(K_1)$ such that
\begin{equation}\label{thetabd}
\|\theta \|_{H^1(K_1)} \leq C_2.
\end{equation}
Repeating this procedure on a nested sequence of sets 
\begin{equation}\label{nested}
K_1 \subset\subset K_2 \subset\subset \cdots \subset\subset K_p \subset\subset \cdots [0,1] \setminus J'
\end{equation}
which exhaust $[0,1]\setminus J'$, and passing to further subsequences via a diagonalization procedure we
arrive at a subsequence (still denoted here by $\e_\ell \to 0$) such that \eqref{weakcon} holds for some $\theta\in H^1\big((0,1)\setminus J'\big)$. 

Finally, we define $u\in H^1\big((0,1)\setminus J';S^1\big)$ via $u(x):=e^{i\theta(x)}$ and verify \eqref{modcon}. The uniform bound \eqref{energybound} implies that $\rho_\e\to 1$ in $L^2((0,1))$ and also that
\begin{equation}\label{linfbound}
C_0\geq\int_0^1\left|1-\abs{\rho_\e}^2\right|\abs{\rho_\e'}\,dx\geq \left| \int_{x_\e}^y (1-\rho_\e^2)\rho_\e'\,dx      \right|
\end{equation}
for any $y\in (0,1)$ where $x_\e\in (0,1)$ is any point selected such that, say, $\rho_\e(x_\e)\leq 2.$ It follows that $\norm{\rho_\e}_{L^\infty(0,1)}<M$ for some $M=M(C_0)$ independent of $\e$. Hence,
for any $\eta>0$ if we select a compact set $K\subset [0,1]\setminus J'$ such that $\abs{[0,1]\setminus K}<\eta$, we can appeal to \eqref{2con} to conclude \eqref{modcon} since
\begin{eqnarray*}
&&\limsup_{l\to \infty}\int_0^1\abs{u_{\e_l}-u}^2\,dx\leq \limsup_{l\to \infty}\int_K\abs{u_{\e_l}-u}^2\,dx+
\limsup_{l\to \infty}\int_{K^c}\abs{u_{\e_l}-u}^2\,dx\\
&&\leq \limsup_{l\to \infty}\int_{K^c}\abs{u_{\e_l}-u}^2\,dx\leq 2\int_{K^c}(M^2+1)\,dx<2(M^2+1)\eta.
\end{eqnarray*}
\end{proof}
\begin{proof}[Proof of Theorem \ref{gconv}]
We will first assume that
$u_\e \to u$ in $L^2\big((0,1);\R^2\big)$ and establish the inequality
\begin{equation}
\liminf E_\e(u_\e)\geq E_0(u).
\label{lscgc}
\end{equation}
To this end,  we may certainly assume that 
\begin{equation}\notag
\liminf E_\e(u_\e) \leq C_0 < \infty\quad\mbox{for some}\;C_0>0,
\end{equation}
since otherwise \eqref{lscgc} is immediate. Let $\{u_{\e_\ell} \}$ be a subsequence which achieves the limit inferior. As in \eqref{mmest} in the proof of Theorem \ref{cpt}, we can then assert that for any integer $q\geq 2$ and up to a further subsequence for which we suppress the notation, one has the lower bound
\begin{equation}
   \liminf_{\ell\to \infty} \int_{B^q_{\e_\ell}} \frac{\e_\ell}{2}\abs{u_{\e_\ell}\,'}^2+\frac{1}{4\e_\ell}(\abs{u_{\e_\ell}}^2-1)^2 \,dx
\geq \left(\sqrt{2}\int_{2^{-q}}^{1-2^{-q}} |z^2-1|\,dz\,\right) \mathcal{H}^0(J^q)  \label{newmmest}
\end{equation}
along with
\begin{equation}\theta_{\e_\ell}\rightharpoonup \theta\quad\mbox{in}\; H^1_{\rm{loc}}\big((0,1)\setminus J^q  \big).\label{thq}
\end{equation}
Here we have emphasized the $q$ dependence to write $J^q$ for the finite set of points in $[0,1]$ and $B^q_{\e_\ell}$ for the set of `bad intervals' collapsing to $J^q$ over which $\abs{u_{\e_\ell}}$ dips from values of $1-2^{-q}$ to $2^{-q}$. Next, we note that for any two positive integers $q_1<q_2$ one has the containment $B^{q_2}_{\e}\subset
B^{q_1}_\e$ and so, for any sequence $\e_{\ell}\to 0$, the finite set of points arising as the limit of $B^{q_2}_{\e_\ell}$ must be a subset of the corresponding limit of the finite collection of collapsing intervals comprising $B^{q_1}_{\e_\ell}$. Also, since the limiting phase $\theta$ of $u$ will be in $H^1_{loc}$ of the complement of any such limit of bad intervals, and since $J$ is assumed to be the minimal one, we have
\[
\mathcal{H}^0(J)\leq \mathcal{H}^0(J^q) <C_1\quad\mbox{for any}\;q<\infty,
\]
for some $C_1=C_1(C_0)$ in light of \eqref{notalot}.
Thus, passing to the limit $q\to\infty$ in \eqref{newmmest} gives
\begin{equation}\label{countlsc}
    \lim_{\ell \to \infty}\int_0^1  \frac{\e_\ell}{2}\abs{u_{\e_\ell}\,'}^2+\frac{1}{4\e_\ell}(\abs{u_{\e_\ell}}^2-1)^2 \,dx \geq \frac{2\sqrt{2}}{3}\mathcal{H}^0(J).
\end{equation}

Turning to the lower-semi-continuity of the twist term, we can repeat the argument of \Cref{cpt} to obtain that, again up to a further subsequence which we do not notate, 
\begin{equation}\theta_{\e_\ell}\rightharpoonup \theta\quad\mbox{in}\; H^1_{\rm{loc}}\big((0,1)\setminus \tilde{J}^q  \big),\label{thq'}
\end{equation}
where $\tilde{J}_q$ is the finite set of points in $[0,1]$ which is the limit of bad intervals $\tilde{B}_{\e_\ell}^q$ where $|u_{\e_\ell}|\leq 1 - 2^{-q-1}$ and dips from $1-2^{-q-1}$ to $1-2^{-q}$. Of course, it could turn out that $\tilde{J}^q=\emptyset$, in which case the convergence of $\theta_{\e_\ell}$ to $\theta$ occurs weakly in $H^1_{\rm{loc}}\big((0,1) \big).$ We also note that 
\begin{equation}\label{rhoconv'}
\rho_\e^2 \to 1 \textup{ in }L^2((0,1)),
\end{equation}
which combined with \eqref{thq'} implies that for any $K \subset \subset (0,1) \setminus J^q$
\begin{equation}\label{eq1}
  \lim_{\ell \to \infty}  \int_K \rho_{\e_\ell}^2\theta_{\e_\ell}' \,dx = \int_K  \theta'\,dx.
\end{equation}
Then, using \eqref{eq1}, the weak convergence of $\theta_{\e_\ell}'$ to $\theta'$, and the fact that $\rho_{\e_\ell}\geq 1 - 2^{-q}$ on $K$ for large $\ell$, we can estimate
\begin{align}\notag
    \liminf_{\ell \to \infty} \int_0^1\displaystyle (\mathcal{T}(u_{\e_\ell})-2\pi N)^2\,dx &\geq \liminf_{\ell \to \infty} \int_K\displaystyle \rho_{\e_\ell}^4(\theta_{\e_\ell}')^2-4\pi N\rho_{\e_\ell}^2\theta_{\e_\ell}' +4\pi^2N^2\,dx  \\ \notag
    &\geq \liminf_{\ell \to \infty}\int_K (1-2^{-q})^4(\theta_{\e_\ell}')^2-4\pi N\rho_{\e_\ell}^2\theta_{\e_\ell}' +4\pi^2N^2\,dx \\ 
    &\geq \int_K (1-2^{-q})^4(\theta')^2-4\pi N \theta' + 4\pi^2 N^2 \,dx.
\end{align}
Choosing larger and larger $K$ and using that $\mathcal{H}^0(J^q) < \infty$, we find 
\begin{equation}\notag
\liminf_{\ell \to \infty} \int_0^1\displaystyle (Tw(u_{\e_\ell})-2\pi N)^2\,dx\geq \int_0^1 (1-2^{-q})^4(\theta')^2-4\pi N \theta' + 4\pi^2 N^2 \,dx.
\end{equation}
Finally, sending $q \to \infty$ yields
\begin{equation}\label{twistlsc'1}
\liminf_{\ell \to \infty} \int_0^1\displaystyle (\mathcal{T}(u_{\e_\ell})-2\pi N)^2\,dx\geq \int_0^1 (\mathcal{T}(u)-2\pi N)^2 \,dx.
\end{equation}
Combining \eqref{countlsc} with \eqref{twistlsc'1} completes the proof of lower semi-continuity.\par
Moving on now to the construction of the recovery sequence for any $u\in L^2\big((0,1);\R^2\big),$ if $u\not\in H^1((0,1)\setminus J;S^1)$ for any finite set $J$, then $E_0(u)=\infty$ and taking the trivial recovery sequence $v_\e\equiv u$ will suffice.

Thus we may assume $u\in H^1((0,1)\setminus J;S^1)$ for a finite set $J$ and our task is to construct a sequence $\{v_\e\}\subset H^1\big((0,1);\R^2\big)$ such that
\begin{equation}
v_\e\to u\;\mbox{in}\;L^2\big((0,1);\R^2\big)\quad\mbox{and}\quad \lim_{\e\to 0} E_\e(v_\e)=E_0(u).
\label{recovery}
\end{equation}

In case the traces of $u$ satisfy the desired boundary conditions for admissibility in $E_\e$, that is, in case
$u(0^+)=1$ and $u(1^-)=e^{i\alpha}$ so that $x=0$ and $x=1$ do not lie in $J$, our construction will take the form $v_\e=\rho_\e u$ for a sequence $\{\rho_\e\}\subset H^1\big((0,1);[0,1]\big)$ to be described below. We first describe the construction for this case and then discuss how it is slightly altered in case $0$ or $1$ lie in $J$. Denoting $J$ by $\{x_1,x_2,\ldots,x_k\}$ with then $J\subset (0,1)$ by assumption, we
then  take $\rho_\e$ to satisfy the following conditions:
\begin{enumerate}[(i)]
    \item $\rho_\e$ is smooth on $[0,1]$.
    \item $\rho_\e  \equiv 0$ on $(x_j-\e^2,x_j+\e^2)$.
    \item $\rho_\e$ makes a standard Modica-Mortola style transition from $1$ to $0$ on $I_j^1$, an interval of size say $O(\sqrt{\e})$ with right endpoint $x_j-\e^2$, and makes a transition from $0$ back to $1$ on an interval of size $O(\sqrt{\e})$ with left endpoint  $x_j+\e^2$ that we denote by $I_j^2$, cf. \cite{MM}. 
    \item $\rho_\e\equiv 1$ on $(0,1) \setminus \cup_j ( I_j^1 \cup (x_j-\e^2,x_j+\e^2) \cup I_j^2)$.
\end{enumerate}
In case either $u(0^+)\not=1$ or $u(1^-)\not=e^{i\alpha}$ so that $0$ and/or $1$ lies in $J$, this procedure must be slightly altered near the endpoints. For example, if $u(0^+)\not=1$ then one requires $\rho_\e$ to make a Modica-Mortola style transition from $1$ down to $0$ on the interval $[0,\sqrt{\e}]$, $\rho_\e\equiv 0$ on $[\sqrt{\e},\sqrt{\e}+\e^2]$ and a Modica-Mortola transition from $0$ back up to $1$ on $[ \sqrt{\e}+\e^2,2\sqrt{\e}+\e^2].$ Then we define
\[
\theta_\e(x)=\left\{\begin{matrix} 0&\;\mbox{if}\;x\in [0,\sqrt{\e}+\e^2/2)\\
\theta&\;\mbox{if}\;x>\sqrt{\e}+\e^2/2,\end{matrix}\right.
\]
where $u=e^{i\theta}$, and take $v_\e=\rho_\e e^{i\theta_\e}.$ A similar recipe is taken in a neighborhood of $x=1$ in case $u(1^-)\not=e^{i\alpha}.$

Computing the transition energy of such a construction is classical and can be found in e.g. \cite{ModicaARMA,MM}.  One finds from conditions (ii)-(iv), that 
\begin{equation}\notag
    \int_0^1  \frac{\e}{2}(\rho_\e ')^2 + \frac{1}{4\e}(\rho_\e^2-1)^2 \, dx \to \frac{2\sqrt{2}}{3} \mathcal{H}^0(J).
\end{equation}
Furthermore, since $\theta \in H^1\big((0,1)\setminus J)$, and $\rho_\e\to 1$ in $L^2\big((0,1)\big)$ it is easily seen that
\begin{equation}\notag
   \lim_{\e\to 0} \int_0^1 \frac{\e}{2}\rho_\e^2(\theta_\e')^2\,dx=0\quad\mbox{and}\quad \lim_{\e\to 0} \frac{L}{2}\int_0^1 \mathcal{T}(v_\e)\,dx= \frac{L}{2}\mathcal{T}(u) \, dx.
\end{equation} 
The proof of \eqref{recovery} is complete.
\end{proof}

We observe that for $u\in H^1((0,1)\setminus J;S^1)$, one has
\begin{equation}
E_0(u)=\frac{L}{2}\int_0^1 (\theta'-2\pi N)^2\,dx + \frac{2\sqrt{2}}{3}\mathcal{H}^0(J),\label{thetaE}
\end{equation}
where $u=e^{i\theta}$ for $\theta\in H^1((0,1)\setminus J)$.
Using this formulation, it is then straight-forward to identify the global minimizers of the $\Gamma$-limit, and consequently the limits of global minimizers of $E_\e$ as well:
\begin{theorem}\label{e0mins}
The global minimizer(s) of $E_0$ are given by:
\begin{enumerate}[(i)]
\item the function 
\begin{equation}u(x)=e^{i(2\pi N+\alpha)x}\label{nojumpmin1}
\end{equation}
having constant twist and no jumps when 
\begin{equation}\label{nojumpmincond1}
L\alpha^2<\frac{4\sqrt{2}}{3} \textit{ and } \alpha \in [0,\pi]. 
\end{equation}
\item the function 
\begin{equation}u(x)=e^{i(2\pi (N-1)+\alpha)x}\label{nojumpmin2}
\end{equation}
having constant twist and no jumps when 
\begin{equation}\label{nojumpmincond2}
L(2\pi - \alpha)^2<\frac{4\sqrt{2}}{3} \textit{ and }  \alpha\in [\pi,2\pi).
\end{equation}
\item the one-parameter set of functions given by
\begin{equation}
u(x)=\left\{\begin{matrix}e^{i2\pi Nx}&\;\mbox{if}\;x<x_0,\\
e^{i(2\pi Nx+\alpha)}&\;\mbox{if}\;x>x_0,
\end{matrix}
\right.\label{onejump}
\end{equation}
for any $x_0\in (0,1)$, that have
 one jump and twist $2\pi N$ away from the jump when
\begin{equation}\label{onejumpcond}
L\alpha^2>\frac{4\sqrt{2}}{3} \textit{ and }L(2\pi- \alpha)^2 > \frac{4\sqrt{2}}{3}.
\end{equation}
\end{enumerate}
\end{theorem}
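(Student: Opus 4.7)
The plan is to exploit the reformulation \eqref{thetaE}, namely $E_0(u) = \frac{L}{2}\int_0^1(\theta' - 2\pi N)^2\,dx + \frac{2\sqrt{2}}{3}\mathcal{H}^0(J)$ for $u = e^{i\theta}$ with $\theta \in H^1((0,1)\setminus J)$, and to minimize by first fixing the number and locations of jumps and optimizing $\theta$, then optimizing the combinatorial data.

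Fix $k := \mathcal{H}^0(J)$ and jump locations; set $x_0 := 0$, $x_{k+1} := 1$ (allowing $0 \in J$ or $1 \in J$ in case $u$ fails the boundary conditions there). With $L_j := x_j - x_{j-1}$ and $T_j := \theta(x_j^-) - \theta(x_{j-1}^+)$, Cauchy--Schwarz gives
\begin{equation*}
\int_{x_{j-1}}^{x_j}(\theta' - 2\pi N)^2\,dx \geq \frac{(T_j - 2\pi N L_j)^2}{L_j},
\end{equation*}
with equality iff $\theta'$ is constant on $(x_{j-1}, x_j)$. Summing over $j$ yields
\begin{equation*}
E_0(u) \geq \frac{L}{2}\sum_{j=1}^{k+1} \frac{(T_j - 2\pi N L_j)^2}{L_j} + \frac{2\sqrt{2}}{3}\,k.
\end{equation*}

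The $T_j$'s are tied together only by the scalar condition $\sum_j T_j + \sum_j J_j \in \alpha + 2\pi\mathbb{Z}$, where $J_j$ is the lifting-jump at $x_j$, with minimality of $J$ forcing $J_j \notin 2\pi\mathbb{Z}$. For $k \geq 1$ there is enough slack in the $J_j$'s to choose $T_j = 2\pi N L_j$ for every $j$ (zeroing the twist term) while still realizing nontrivial jumps summing to $\alpha \pmod{2\pi}$, so the restricted minimum is $\frac{2\sqrt{2}}{3}\,k$. For $k = 0$ the constraint reduces to $T_1 \in \alpha + 2\pi\mathbb{Z}$, and a one-variable optimization gives the minimum $\frac{L}{2}\alpha^2$ when $\alpha \in [0,\pi]$ (at $T_1 = 2\pi N + \alpha$) and $\frac{L}{2}(2\pi-\alpha)^2$ when $\alpha \in [\pi, 2\pi)$ (at $T_1 = 2\pi(N-1)+\alpha$).

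Because $k \geq 2$ gives energy $\geq \frac{4\sqrt{2}}{3} > \frac{2\sqrt{2}}{3}$, the global optimum is attained at $k = 0$ or $k = 1$, and weighing these two against each other produces precisely the trichotomy (i)--(iii). The explicit minimizers \eqref{nojumpmin1}, \eqref{nojumpmin2}, \eqref{onejump} are then read off from the equality case in Cauchy--Schwarz (constant $\theta'$ on each subinterval) together with the chosen values of $T_j$; in case (iii), $x_0 \in (0,1)$ remains a free parameter because once $T_j = 2\pi N L_j$ and $J_1 = \alpha$ are fixed, the two subinterval lengths are otherwise unconstrained. The main point requiring care is the bookkeeping around the constraint $J_j \notin 2\pi\mathbb{Z}$: one must verify that the zero-twist configurations for $k \geq 1$ really attain the infimum without collapsing to configurations with fewer genuine jumps. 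This fails only when $\alpha = 0$, which is harmless since case (i) then puts us at $k = 0$ anyway.
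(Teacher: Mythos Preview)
Your argument is correct and follows essentially the same approach as the paper: reduce to piecewise-affine phases, show the optimal one-jump competitor has energy exactly $\frac{2\sqrt{2}}{3}$, observe that $k\geq 2$ jumps cost at least $\frac{4\sqrt{2}}{3}$, minimize the no-jump case over the integer $M$ in $\theta=(2\pi M+\alpha)x$, and compare. The paper's proof is slightly terser---it invokes ``criticality implies $\theta'$ constant'' rather than your Cauchy--Schwarz step and simply exhibits the competitor \eqref{onejump} rather than deriving it from a constraint analysis---but the logic is the same.
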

Since any limit of global minimizers of a $\Gamma$-converging sequence must itself be a global minimizer of the $\Gamma$-limit, one immediately concludes the following result based on Theorem \ref{e0mins} and the compactness result Theorem \ref{cpt}:

\begin{corollary}\label{threepos}
Let $\{u_\e\}$ denote a family of minimizers of $E_\e$ subject to the boundary conditions \eqref{bc}. Then if \eqref{onejumpcond} holds, we have $u_\e\to u$ in $L^2$ for some $u$ in the one-parameter family given by \eqref{onejump}, while if \eqref{nojumpmincond1} or \eqref{nojumpmincond2} holds, there will be a subsequence $u_{\e_j}\to u$ in $L^2$ with $u=e^{i(2\pi N+\alpha)x}$ or $u=e^{i(2\pi (N-1)+\alpha)x}$, respectively.
\end{corollary}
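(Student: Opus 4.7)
The plan is to pass to the lifting $u = e^{i\theta}$, rewrite $E_0$ in terms of $\theta$ as in \eqref{thetaE}, and then compute the infimum of $E_0$ separately over competitors with $J=\emptyset$ and over competitors with $\mathcal{H}^0(J)\geq 1$. Comparing these two infima under the three parameter regimes will yield the classification.

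\emph{Step 1 (no-jump infimum).} If $J=\emptyset$, then $\theta\in H^1(0,1)$ is continuous and satisfies $\theta(0)=0$, $\theta(1)=2\pi M+\alpha$ for some $M\in\mathbb{Z}$ (the integer being forced by $u(0)=1$ and $u(1)=e^{i\alpha}$). By Jensen's inequality,
\[
\int_0^1 (\theta'-2\pi N)^2\,dx\;\geq\;\Bigl(\theta(1)-\theta(0)-2\pi N\Bigr)^2=\bigl(2\pi(M-N)+\alpha\bigr)^2,
\]
with equality iff $\theta$ is affine with slope $2\pi M+\alpha$. A one-line optimization over $M\in\mathbb{Z}$ shows that $\min_M (2\pi(M-N)+\alpha)^2=\alpha^2$ (attained at $M=N$) when $\alpha\in[0,\pi]$, and equals $(2\pi-\alpha)^2$ (attained at $M=N-1$) when $\alpha\in[\pi,2\pi)$. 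Consequently the no-jump infimum of $E_0$ equals $\tfrac{L}{2}\alpha^2$, realized uniquely by \eqref{nojumpmin1}, in the first range, and $\tfrac{L}{2}(2\pi-\alpha)^2$, realized uniquely by \eqref{nojumpmin2}, in the second.

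\emph{Step 2 (at-least-one-jump infimum).} Suppose $\mathcal{H}^0(J)=k\geq 1$. The jump contribution is $\tfrac{2\sqrt{2}}{3}k$. On each of the (at most $k+1$) open subintervals defined by $J$, $\theta$ is independently $H^1$, and the value of $\theta$ on each piece is only determined mod $2\pi$ by $u$. Hence we may set $\theta(x)=2\pi N x+c_i$ on the $i$-th piece for constants $c_i$ chosen to match whatever boundary/trace conditions apply; crucially, each jump introduces one free constant because $u$ is unchanged by adding an integer multiple of $2\pi$ to $\theta$ on a piece. With such a choice, $\theta'\equiv 2\pi N$ a.e., making the twist integral in \eqref{thetaE} vanish. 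Therefore, the infimum of $E_0$ over competitors with exactly $k$ jumps is $\tfrac{2\sqrt{2}}{3}k$. Minimizing over $k\geq 1$ gives infimum $\tfrac{2\sqrt{2}}{3}$, attained at $k=1$. If we further require the single jump to lie in the interior at a point $x_0\in(0,1)$, the constants $c_1=0$ and $c_2\in\alpha+2\pi\mathbb{Z}$ are forced, yielding (modulo the irrelevant $2\pi$ ambiguity) exactly the family \eqref{onejump}.

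\emph{Step 3 (comparison and conclusion).} The global minimum of $E_0$ is the smaller of the two infima. Under \eqref{nojumpmincond1}, $\tfrac{L}{2}\alpha^2<\tfrac{2\sqrt{2}}{3}$ so the no-jump candidate wins and \eqref{nojumpmin1} is the unique minimizer; the analogous argument under \eqref{nojumpmincond2} gives \eqref{nojumpmin2}. Under \eqref{onejumpcond}, both $\tfrac{L}{2}\alpha^2$ and $\tfrac{L}{2}(2\pi-\alpha)^2$ exceed $\tfrac{2\sqrt{2}}{3}$, so the no-jump infimum (which is the smaller of these two) still exceeds $\tfrac{2\sqrt{2}}{3}$, and any minimizer must have exactly one jump with twist integral zero, giving the one-parameter family \eqref{onejump}.

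The main subtlety I anticipate is the bookkeeping in Step 2: one must carefully verify that an interior jump truly supplies a full degree of freedom in the phase (up to $2\pi$), so that setting $\theta'\equiv 2\pi N$ on every piece is compatible with all admissibility constraints; this is precisely where the definition of $J$ (which is \emph{phase-blind} and penalizes jumps uniformly by $\tfrac{2\sqrt{2}}{3}$ regardless of size) is essential and explains why jumping by $\alpha$ is "free" beyond the fixed Modica--Mortola cost.
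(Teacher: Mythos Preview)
Your argument is a correct and slightly more detailed version of the paper's proof of Theorem~\ref{e0mins} (the classification of global minimizers of the $\Gamma$-limit $E_0$), but it does not prove Corollary~\ref{threepos}. The Corollary is a statement about the family $\{u_\e\}$ of minimizers of $E_\e$ and their $L^2$-limits as $\e\to 0$; nowhere in your three steps do $E_\e$ or $u_\e$ appear.

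What is missing is the bridge from $E_\e$ to $E_0$. The paper supplies this in one line preceding the Corollary: by Theorem~\ref{cpt} the minimizers $u_\e$ (which satisfy a uniform energy bound, e.g.\ via \eqref{easybd} or \eqref{test}) have $L^2$-convergent subsequences, and by Theorem~\ref{gconv} together with the fundamental theorem of $\Gamma$-convergence, any such subsequential limit must minimize $E_0$. Only then does the classification you carried out identify the limit. You should add this step explicitly; without it your proof establishes the structure of $\arg\min E_0$ but says nothing about $\lim_{\e\to 0} u_\e$. Note also that in the one-jump regime the paper (and you) tacitly restrict to interior jumps $x_0\in(0,1)$, although a boundary jump yields the same energy; and that full-sequence convergence in that regime, as asserted in the Corollary, would require an additional word since the limit is not unique.
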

\begin{remark}
It is in the case where $L\alpha^2>\frac{4\sqrt{2}}{3} $ and $L(2\pi - \alpha)^2 > \frac{4\sqrt{2}}{3} $ that one really sees the most dramatic effect of the assumption of disparate elastic constants present in our model. The relatively expensive cost of twist leads the global minimizer of $E_\e$, which of course is necessarily smooth, to rapidly change its phase, a process that can only be achieved with small energetic cost by having the modulus simultaneously plunge towards zero.
\end{remark}

\begin{remark}
We have not attempted to determine the optimal location of the jump location $x_0$ for minimizers of $E_\e$ in scenario \eqref{onejump}. We suspect this might entail much higher order energetic considerations--perhaps even at an exponentially small order--but we are not sure. 
\end{remark}

\begin{proof}[Proof of Theorem \ref{e0mins}]
 When $\alpha=0$ then clearly the global minimizer is uniquely given by $u=e^{i2\pi Nx}$ since it has zero energy. Consider then the case $\alpha\in (0,2\pi).$ By selecting any point $x_0\in (0,1)$, and taking 
$u$ to be given by \eqref{onejump},
we see that there is always a competitor with one jump having energy given simply by $\frac{2\sqrt{2}}{3}$. Any competitor jumping more than once has energy no lower than twice that value.  On the other hand, minimization of $E_0$ among competitors with $J=\emptyset$ is standard, since criticality implies $\theta'$ is constant. Given the boundary conditions, this requires $u=e^{i(2\pi M+\alpha)x}$ for some $M \in \mathbb{Z}$ to be determined. The energy of such a $u$ is $\frac{L}{2}(2\pi(M-N)+\alpha)^2$. Since $\alpha \in (0,2\pi)$, the minimum over $M$ is $\frac{L}{2}(2\pi(N-N)+\alpha)^2=\frac{L}{2}\alpha^2$ if $\alpha<2\pi - \alpha$ and $\frac{L}{2}(2\pi(N-1-N)+\alpha)^2=\frac{L}{2}(2\pi - \alpha)^2$ if $2\pi - \alpha < \alpha$. Comparing these two energies to that of the one-jump competitors in \eqref{onejump}, the theorem follows.  We note that if $\alpha=\pi$ in this regime, there are two global minimizers.
\end{proof}

Next we state a result on local minimizers of the $\Gamma$-limit. These functions are the $\e\to 0$ limit of the non-vanishing local minimizers captured in Theorem \ref{constrainedmins}.
\begin{theorem}\label{localmins}
For any positive integer $M$ the function
$u=e^{i(2\pi M+\alpha)x}$ 
is an isolated $L^2$-local minimizer of $E_0$. 
\end{theorem}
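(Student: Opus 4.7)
The plan is to argue by contradiction: assume there is a sequence $\{v_n\}$ with $v_n \neq u_M := e^{i(2\pi M + \alpha)x}$, $v_n \to u_M$ in $L^2$, and $E_0(v_n) \leq E_0(u_M) = \tfrac{L}{2}c^2$ where $c := 2\pi(M-N) + \alpha$. Writing $v_n = e^{i\phi_n}$ on $(0,1)\setminus J_n$ with $J_n$ minimal, and setting $\psi_n := \phi_n - (2\pi M + \alpha)x$, the energy bound yields $\mathcal{H}^0(J_n) \leq 3Lc^2/(4\sqrt{2})$ and a uniform $H^1$ bound on $\psi_n$ over $(0,1)\setminus J_n$; after a subsequence, $\mathcal{H}^0(J_n)\equiv k$ is constant. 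Direct expansion produces
\[
E_0(v_n) - E_0(u_M) = \tfrac{L}{2}\int_0^1 (\psi_n')^2\,dx + Lc\int_0^1 \psi_n'\,dx + \tfrac{2\sqrt{2}}{3}\,k,
\]
and I would then split into the cases $k = 0$ and $k \geq 1$.

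For $k = 0$, the $H^1$ bound forces $\phi_n(1) - \phi_n(0) = 2\pi K_n + \alpha$ with $|K_n|$ bounded, so by pigeonhole $K_n \equiv K^*$ along a further subsequence. If $K^* \neq M$, the intermediate value theorem applied to $\psi_n$ (which vanishes at $0$ and equals the nonzero value $2\pi(K^*-M)$ at $1$) produces an $x^*$ with $\psi_n(x^*) = \pi/2$, and uniform H\"older continuity with the $H^1$ constant keeps $\psi_n \in (\pi/4,3\pi/4)$ on an interval of positive length independent of $n$, so $1 - \cos\psi_n \geq 1 - \sqrt{2}/2 > 0$ there, contradicting $\|v_n - u_M\|_{L^2}^2 = 2\int_0^1 (1-\cos\psi_n)\,dx \to 0$. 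Hence $K^* = M$, and strict convexity of $\phi \mapsto \int_0^1 (\phi' - 2\pi N)^2\,dx$ on the affine class $\{\phi(0) = 0,\,\phi(1) = 2\pi M + \alpha\}$ combined with $E_0(v_n) \leq E_0(u_M)$ forces $\phi_n(x) = (2\pi M + \alpha)x$, i.e.\ $v_n = u_M$, against $v_n \neq u_M$.

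The case $k \geq 1$ is the crux. The key quantitative tool is a winding estimate obtained by combining the area formula with Cauchy-Schwarz,
\[
4\sqrt{2}\cdot\#\{\text{full windings of }\psi_n\text{ on }I\} \leq \int_I \sqrt{1-\cos\psi_n}\,|\psi_n'|\,dx \leq \Bigl(\int_I (1-\cos\psi_n)\,dx\Bigr)^{1/2}\Bigl(\int_I (\psi_n')^2\,dx\Bigr)^{1/2},
\]
which rules out any $2\pi$ winding of $\psi_n$ on any sub-interval of $(0,1)\setminus J_n$ once $\|v_n - u_M\|_{L^2}$ is sufficiently small. Coupled with an $H^1$-H\"older/intermediate-value argument exploiting smallness of $\int(1-\cos\psi_n)\,dx$, this yields, on each component $I_j^{(n)}$ of $(0,1)\setminus J_n$ of length $\geq \epsilon$, an integer $m_j^{(n)}$ with $\|\psi_n - 2\pi m_j^{(n)}\|_{L^\infty(I_j^{(n)})} \to 0$; on short components of length $<\epsilon$, Cauchy-Schwarz gives $|\int_{I_j^{(n)}}\psi_n'\,dx| \leq C\sqrt{\epsilon}$. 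Telescoping across the at most $k + 1$ components, the long ones contribute $o_n(1)$ each (via differences of the $2\pi m_j^{(n)}$) and the short ones at most $C\sqrt{\epsilon}$ each, so $|\int_0^1\psi_n'\,dx| \leq (k+1)C\sqrt{\epsilon} + o_n(1)$, whence
\[
E_0(v_n) - E_0(u_M) \geq \tfrac{2\sqrt{2}}{3}\,k - |Lc|\bigl((k+1)C\sqrt{\epsilon} + o_n(1)\bigr) > 0
\]
for $\epsilon$ chosen small and $n$ large, contradicting $E_0(v_n) \leq E_0(u_M)$.

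The principal obstacle is this $k \geq 1$ case: $L^2$-closeness is weak enough to accommodate $v_n$ with genuine jumps of small $L^2$-impact (phase-change near, but not at, a multiple of $2\pi$), and each such jump carries the fixed penalty $\tfrac{2\sqrt{2}}{3}$. The winding estimate above forces those phase-changes to be, up to $o(1)$, integer multiples of $2\pi$, so savings in the continuous twist integral cannot offset the jump penalty. Cases where $0 \in J_n$ or $1 \in J_n$ can be handled by essentially the same argument after invoking a Gagliardo-Nirenberg inequality on the first and last components to control the traces from the joint $L^2$ and $H^1$ bounds.
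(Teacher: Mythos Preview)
Your proof is correct and takes a genuinely different route from the paper's. The paper argues directly rather than by contradiction: for a single competitor $\theta$ with $\|\theta-\theta_M\|_{L^2}<\delta$ it expands $E_0(\theta)-E_0(\theta_M)$, splits the components $(a_j,b_j)$ into those with small net phase increment $k_j:=v(b_j)-v(a_j)<k_0$ (where the jump cost $\tfrac{2\sqrt2}{3}$ per point absorbs the cross term $-2\pi LN k_j$) and those with $k_j\ge k_0$, on which an elementary Chebyshev--Cauchy--Schwarz estimate yields $\int_{a_j}^{b_j}(v')^2\ge k_j^4/\bigl(256\|v\|_{L^2(a_j,b_j)}^2\bigr)$, forcing positivity once $\delta$ is small. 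Your approach instead works with $\|v_n-u_M\|_{L^2}^2=2\int(1-\cos\psi_n)$ and combines uniform H\"older control with the vanishing of $\int(1-\cos\psi_n)$ to pin $\psi_n$ near a single multiple of $2\pi$ on each long component, so that $\int\psi_n'$ is $o(1)$ and the jump penalty dominates (your winding inequality is correct but actually redundant here---the H\"older/IVT argument alone rules out transitions between different multiples of $2\pi$). A point worth noting is that the paper phrases closeness in terms of the \emph{liftings} $\|\theta-\theta_M\|_{L^2}$, whereas you work with $L^2$-closeness of the \emph{maps} themselves; since $\|u-u_M\|_{L^2}\le\|\theta-\theta_M\|_{L^2}$, your hypothesis is a priori weaker, so your argument more directly matches the stated $L^2((0,1);\mathbb{R}^2)$ topology for $E_0$. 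The paper's inequality is slicker once one accepts the phase distance; your route is more robust to the choice of metric.
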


By invoking Theorem 4.1 of \cite{KS}, one can conclude from Theorem \ref{localmins} and Theorem \ref{gconv} that there exist local minimizers of $E_\e$ for $\e$ small that converge to this isolated local minimizer of $E_0$. This provides for an alternative proof of existence for these local minimizers to the one given in Proposition \ref{constrainedmins}. However, the approach in Theorem \ref{constrainedmins} yields much more detailed information on the structure of these functions via \eqref{suprho}, \eqref{thetaprime} and \eqref{eq:ls}.

\begin{proof} [Proof of Theorem \ref{localmins}]
We fix a positive integer $M$ and a number $\alpha\in [0,2\pi)$. We will consider the case $M<N$. The case $M\geq N$ is similar. Of course, in case $M=N$ and \eqref{nojumpmincond1}  holds, then in fact $u=e^{i(2\pi N+\alpha)x}$  is the global minimizer, as was already addressed in Theorem \ref{e0mins}. Let us denote $\theta_M:=2\pi Mx+\alpha x$. In light of \eqref{thetaE}, our goal is to show that for some $\delta>0$, one has $E_0(\theta)>E_0(\theta_M)$ whenever $\theta\in H^1\big((0,1)\setminus J;S^1\big)$ for some finite set $J$ provided
$0<\norm{\theta-\theta_M}_{L^2(0,1)}<\delta$.

We begin with the easiest case where $J=\emptyset$ and where $\theta(0)=\theta_M(0),\;\theta(1)=\theta_M(1)$. Writing $v:=\theta-\theta_M$, we calculate
\begin{eqnarray*}
&&E_0(\theta)-E_0(\theta_M)=\frac{L}{2}\int_0^1\big(\theta_M'+v'-2\pi N  \big)^2\,dx-
\frac{L}{2}\int_0^1\big(\theta_M'-2\pi N  \big)^2\,dx\\
&& =2\pi L (M-N+\alpha/2\pi)\int_0^1 v'\,dx+\frac{L}{2}\int_0^1 (v')^2\,dx=\frac{L}{2}\int_0^1 (v')^2\,dx>0,
\end{eqnarray*}
since in the case under consideration, $v(0)=0=v(1)$.

Now we turn to the general case where $J\not=\emptyset$. To this end, consider a competitor $\theta\in H^1\big(\cup_{j=1}^\ell(a_j,b_j)\big)$ where then $J=(0,1)\setminus \cup_{j=1}^\ell(a_j,b_j)$, along with perhaps $x=0$ and/or $x=1$, depending upon whether a competitor satisfies the boundary conditions.  Thus, depending upon the boundary conditions of a competitor, we note that
\begin{equation}
\mathcal{H}^0(J)\in \{\ell-1,\ell,\ell+1\}.\label{ell}
\end{equation}
Again we introduce $v:=\theta-\theta_M$ and after a rearrangement of the indices, we suppose that for  $j=1,2\ldots,\ell'$, one has the condition 
\begin{equation}
    k_j:=v(b_j)-v(a_j)< \frac{\sqrt{2}}{6\pi N}:=k_0,\label{smalljump}
\end{equation}
while for $j=\ell'+1,\ldots,\ell$, the opposite inequality holds. We allow for the possibility that either $\ell'=0$ or $\ell'=\ell$.

Then we again calculate the energy difference $E_0(\theta)-E_0(\theta_M)$ by splitting up the sum as follows:
\begin{eqnarray}
&&E_0(\theta)-E_0(\theta_M)= \frac{2\sqrt{2}}{3}\mathcal{H}^0(J)+2\pi L (M-N+\alpha/2\pi)\sum_{j=1}^{\ell'}\int_{a_j}^{b_j}v'\,dx \nonumber\\
&&+2\pi L (M-N+\alpha/2\pi)\sum_{j=\ell'+1}^{\ell}\int_{a_j}^{b_j}v'\,dx
+\frac{L}{2}\sum_{j=1}^\ell\int_{a_j}^{b_j}(v')^2\,dx\nonumber\\
&& > \frac{2\sqrt{2}}{3}\mathcal{H}^0(J)-2\pi L N k_0 \ell'\nonumber\\
&&-2
\pi LN \sum_{j=\ell'+1}^{\ell}k_j+\frac{L}{2}\sum_{j=\ell'+1}^{\ell}
\int_{a_j}^{b_j}(v')^2\,dx\nonumber\\
&&>\sum_{j=\ell'+1}^{\ell}\big(-2\pi LN k_j+\frac{L}{2}
\int_{a_j}^{b_j}(v')^2\,dx\big),\label{sofar}
\end{eqnarray}
in light of \eqref{smalljump} and \eqref{ell}.

If $\ell'=\ell$ then the last sum is vacuous and the proof is complete. If not, then  we now fix any $j\in \{\ell'+1,\ldots,\ell\}$ for which the reverse inequality to \eqref{smalljump} holds, and observe that
\begin{equation}
   \delta_j^2:= \int_{a_j}^{b_j}v^2\,dx\geq \int_{(a_j,b_j)\cap\{\abs{v}>k_j/4\}}v^2\,dx
    \geq \frac{k_j^2}{16}{\rm meas}\,\big((a_j,b_j)\cap\{\abs{v}>k_j/4\}\big).
    \label{l2big}
\end{equation}
Also,
\begin{eqnarray*}
&&\frac{k_j}{4}<\int_{(a_j,b_j)\cap\{\abs{v}>k_j/4\}}\abs{v'}\,dx\\
&&\leq  {\rm meas}\,\big((a_j,b_j)\cap\{\abs{v}>k_j/4\}\big)^{1/2}\big( \int_{(a_j,b_j)\cap\{\abs{v}>k_j/4\}}(v')^2\,dx\big)^{1/2}.
\end{eqnarray*}
Combining this with \eqref{l2big} yields the inequality
\[
\int_{a_j}^{b_j}(v')^2\,dx\geq \frac{k_j^4}{256\delta_j^2}
\]
which we now substitute into \eqref{sofar} to conclude that
\begin{equation}
E_0(\theta)-E_0(\theta_M)>
\sum_{j=\ell'+1}^{\ell}\big(\frac{Lk_j^4}{512\delta_j^2}-2\pi LNk_j\big).
\label{atlast}
\end{equation}
Choosing $\delta$ (which we recall denotes ($\norm{v}_{L^2(0,1)}$) such that
\[
\delta^2<\frac{k_0^3}{1024\pi  N},
\]
and using that $\delta_j\leq \delta$ while $k_j\geq k_0$ for all $j$, we obtain positivity of the right-hand side of \eqref{atlast}.

\end{proof}

\section{An energy barrier leading to saddle points}

The local minimizers provided by Theorem \ref{constrainedmins} can be viewed as the least energy critical points of $E_\e$ within a given degree or winding number class given by the amount of twist. One might anticipate then that to pass continuously from one of these classes to another requires both the emergence of a zero in the order parameter and the expenditure of a certain amount of energy. What is more, one might expect the presence of saddle points in some sense interspersed between the distinct degree classes. That is the content of the two results in this section.

In the first theorem we demonstrate that the energy barrier between any two local minimizers $u_{\e,M_1}$ and $u_{\e,M_2}$ with $M_1\neq M_2$ is at least $\frac{2\sqrt{2}}{3}$ when $\e$ is sufficiently small. To this end, given a $\Lambda>0,$ we define the energy sublevel set
\[E_{\varepsilon}^\Lambda:=\left\{u\in \mathcal{A}_\alpha:E_\varepsilon(u)<\Lambda \right\}.\]
We have the following: 
\begin{theorem}
\label{thm:sublevel}
Let $M_1,M_2\in\mathbb{N}$ be such that $M_1\neq M_2$ and assume that $u_{\e,M_1}$ and $u_{\e,M_2}$ are local minimizers of $E_\e$ as obtained in Theorem~\ref{constrainedmins}. Suppose that 
\begin{equation}\gamma^\e:[0,1]\to \mathcal{A}_\alpha\quad\mbox{with}\;\gamma^\e(0)=u_{\e,M_1}\;\mbox{and}\;\gamma^\e(1)=u_{\e,M_2}\label{path}\end{equation}
is a continuous path in $\mathcal{A}_\alpha$ that connects $u_{\e,M_1}$ and $u_{\e,M_2}$. Fix an $h>0$ and set $\Lambda_h:=2\pi^2L{(N-M_1-\alpha/2\pi)}^2+\frac{2\sqrt2}3-h$. There exists an $\e_h>0$ such that the curve $\gamma^\e$ leaves the set $E^{\Lambda_h}_\e$ whenever $\e<\e_h.$
\end{theorem}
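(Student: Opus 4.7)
The plan is to argue by contradiction: assume there is a sequence $\e_k\to 0$ for which the paths $\gamma^{\e_k}$ of \eqref{path} remain entirely within $E_{\e_k}^{\Lambda_h}$. For each $\e$ I will identify a specific time $T_\e\in(0,1)$ along the path whose configuration $v_\e:=\gamma^\e(T_\e)$ carries winding number $M_1$ while having a modulus that has just dipped close to zero; I will then show that the energy of such a $v_\e$ must asymptotically exceed $\Lambda_h$.

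First I would choose $\delta_\e:=\e^{1/8}$, so that $\delta_\e\to 0$ yet $\sqrt{\e}/\delta_\e^{2}=\e^{1/4}\to 0$. Since $H^1(0,1)\hookrightarrow C^0([0,1])$, the map $t\mapsto \min_x|\gamma^\e(t)(x)|$ is continuous on $[0,1]$. If this minimum were strictly positive throughout, a standard covering space argument would produce a jointly continuous lifting $\Theta(t,x)$ with $\Theta(t,0)=0$, forcing $(\Theta(t,1)-\alpha)/(2\pi)\in\mathbb{Z}$ to be constant in $t$; since this value is $M_1$ at $t=0$ and $M_2$ at $t=1$, we would contradict $M_1\neq M_2$. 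Consequently
\[
T_\e := \inf\{t\in[0,1]:\min_x|\gamma^\e(t)(x)|\leq\delta_\e\}
\]
exists in $(0,1)$ for $\e$ small (using \eqref{suprho} to control the endpoints). By continuity of the minimum and of the path, $v_\e$ admits a lifting $v_\e=\rho_\e e^{i\theta_\e}$ with $\rho_\e\geq\delta_\e$, $\rho_\e(x_\e)=\delta_\e$ for some $x_\e$, $\theta_\e(0)=0$, and $\theta_\e(1)=2\pi M_1+\alpha$, the last identity inherited from the constancy of winding on $[0,T_\e]$.

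The heart of the argument is then a two-part lower bound on $E_\e(v_\e)$. For the wall part, the Modica--Mortola inequality
\[
\tfrac{\e}{2}(\rho_\e')^2+\tfrac{1}{4\e}(\rho_\e^2-1)^2 \geq \tfrac{1}{\sqrt 2}\,|\rho_\e'|\,|1-\rho_\e^2|,
\]
together with the three data points $\rho_\e(0)=\rho_\e(1)=1$ and $\rho_\e(x_\e)=\delta_\e$, controlled via the monotone auxiliary function $\phi(z):=\int_0^z|1-s^2|\,ds$ (so $\phi(1)=2/3$), would give
\[
\int_0^1\tfrac{\e}{2}(\rho_\e')^2+\tfrac{1}{4\e}(\rho_\e^2-1)^2\,dx \;\geq\; \tfrac{1}{\sqrt 2}\bigl(4/3-2\phi(\delta_\e)\bigr) \;=\; \tfrac{2\sqrt 2}{3}-o(1).
\]
For the twist part, Jensen's inequality on $(0,1)$ gives
\[
\int_0^1(\rho_\e^2\theta_\e'-2\pi N)^2\,dx \;\geq\; \Bigl(\int_0^1\rho_\e^2\theta_\e'\,dx-2\pi N\Bigr)^2,
\]
and since $\int_0^1\theta_\e'\,dx=2\pi M_1+\alpha$, the issue reduces to estimating $\int_0^1(1-\rho_\e^2)\theta_\e'\,dx$. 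The bound $E_\e(v_\e)\leq\Lambda_h$ forces $\|1-\rho_\e^2\|_{L^2}=O(\sqrt\e)$, while $\rho_\e\geq\delta_\e$ combined with the $L^2$-boundedness of $\rho_\e^2\theta_\e'$ (again from the energy bound) yields $\|\theta_\e'\|_{L^2}=O(\delta_\e^{-2})$; Cauchy--Schwarz then bounds the error by $O(\sqrt\e/\delta_\e^{2})=O(\e^{1/4})=o(1)$, producing a twist cost of at least $2\pi^2 L(N-M_1-\alpha/(2\pi))^2+o(1)$.

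Summing the two bounds gives $E_\e(v_\e)\geq\tfrac{2\sqrt 2}{3}+2\pi^2 L(N-M_1-\alpha/(2\pi))^2-o(1)$, which strictly exceeds $\Lambda_h$ once $\e$ is small enough that the $o(1)$ error falls below $h$, contradicting $v_\e\in E_\e^{\Lambda_h}$. The main delicate point is the calibration of $\delta_\e$: it must be small enough that essentially the full wall cost $\tfrac{2\sqrt 2}{3}$ is captured (requiring $\phi(\delta_\e)\to 0$), yet large enough that a continuous lifting pinning the winding to $M_1$ persists at $T_\e$ and the twist error $\sqrt\e/\delta_\e^{2}$ vanishes. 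The choice $\delta_\e=\e^{1/8}$ comfortably achieves both.
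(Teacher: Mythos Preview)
Your proof is correct and follows the same overall architecture as the paper's: locate a time along the path at which the modulus first dips to some small threshold while the winding number is still $M_1$, and bound the energy there from below by a Modica--Mortola wall cost plus a twist cost.

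The genuine difference lies in how the twist lower bound is obtained. The paper fixes a constant threshold $\delta\in(0,1/2)$ and, given the modulus profile $\rho$ at the chosen time, \emph{minimizes} $E_\e(\rho\,e^{i\theta})$ over phases $\theta\in\mathcal{H}_{M_1,\alpha}$, re-deriving the Euler--Lagrange formula \eqref{beth} for the optimal $\bar\theta_\e'$ and computing the resulting twist integral explicitly; only at the end are $\delta$ and then $\e_h$ chosen small. You instead let the threshold $\delta_\e=\e^{1/8}$ vanish with $\e$ and bypass the optimization entirely: Jensen's inequality reduces the twist term to $\big(\int\rho_\e^2\theta_\e'-2\pi N\big)^2$, and Cauchy--Schwarz controls $\int(1-\rho_\e^2)\theta_\e'$ via $\|1-\rho_\e^2\|_{L^2}=O(\sqrt\e)$ and $\|\theta_\e'\|_{L^2}\le\delta_\e^{-2}\|\rho_\e^2\theta_\e'\|_{L^2}=O(\delta_\e^{-2})$. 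Your route is more elementary and fully self-contained, since it does not appeal to the analysis in Theorem~\ref{constrainedmins}; the paper's route, on the other hand, reuses existing machinery and would also supply the explicit $O(\sqrt\e)$ rate in \eqref{eq:b2} if one tracked constants. The calibration $\delta_\e=\e^{1/8}$ neatly handles both requirements---$\phi(\delta_\e)\to 0$ for the full wall cost and $\sqrt\e/\delta_\e^{2}\to 0$ for the twist error---in a single limit, whereas the paper separates these into a choice of $\delta$ followed by a choice of $\e_h$.
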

\begin{proof}
Fix any $h\in (0,1)$ and any curve $\gamma^\e$ satisfying \eqref{path}.
 Denote 
\[\gamma^\e(t):=u^\e_t=\rho^\e_te^{i\theta^\e_t}\]
for every $t\in[0,1]$. The non-vanishing functions $e^{-i\alpha x}u_{\e,M_1}$ and $e^{-i\alpha x}u_{\e,M_2}$ have winding numbers $M_1$ and $M_2$ respectively on $[0,1]$ and so $u^\e_t(x)$ has to vanish for some $x\in(0,1)$ and $t\in(0,1)$. Since $\gamma^\e$ is continuous and $u^\e_t(\cdot)$ is a continuous function for every $t\in[0,1]$, it follows that, given any $\delta\in(0,1/2),$ we can find $t^\e_\delta\in(0,1)$ such that $\min_{x\in(0,1)}\rho^\e_{t^\e_\delta}(x)=\delta$ and the winding number for $e^{-i\alpha x}u^\e_{t^\e_\delta}$ is still equal to  $M_1.$

Now suppose by way of contradiction that $\gamma^\e([0,1])\subset  E_\e^{\Lambda_h}$. We would like to estimate $E_\e(u^\e_{t^\e_\delta}).$ First, by minimizing $E_\e(\rho^\e_{t^\e_\delta}e^{i\theta})$ over $\theta\in \mathcal{T}_{M_1,\alpha}$, note that the same approach that led to \eqref{beth} can be followed to show that there exists a ${\bar\theta}_\e\in{\mathcal T}_{M_1,\alpha}$ such that 
\begin{equation}
{\bar\theta}_\e^\prime=\frac{2\pi LM_1+L\alpha+2\pi LN((\rho^\e_{t^\e_\delta})^2-1)}{L(\rho^\e_{t^\e_\delta})^4+\e {(\rho^\e_{t^\e_\delta})^2}}+O(\sqrt{\e})\label{beth1}
\end{equation}
on $(0,1)$, and necessarily
\begin{equation}
    \label{eq:b3}
    E_\e\left(\rho^\e_{t_\delta}e^{i\bar\theta_\e}\right)\leq E_\e\left(u^\e_{t_\delta}\right).
\end{equation}
Using the standard Modica-Mortola arguments, we now have
\begin{equation*}
    \int_0^1  \frac{\e}{2}((\rho^\e_{t^\e_\delta})')^2 + \frac{1}{4\e}\big((\rho^\e_{t^\e_\delta})^2-1\big)^2 \, dx \geq c(\delta),
\end{equation*}
where $\lim_{\delta\to0}c(\delta)=\frac{2\sqrt{2}}{3}.$ Further, we can appeal to \eqref{beth1}-\eqref{eq:b3} and the assumption that $\gamma^\e(t^\e_{\delta})\in  E_\e^{\Lambda_h}$ to show that
\begin{multline}
    \label{eq:b2}
    \frac{L}{2}\int_0^1\left(2\pi N - (\rho^\e_{t^\e_\delta})^2{\bar\theta}^\prime_\e\right)^2 \, dx=\frac{L}{2}\int_0^1\left(2\pi N - \frac{2\pi LM_1+L\alpha+2\pi LN\big({(\rho^\e_{t^\e_\delta}})^2-1\big)}{L(\rho^\e_{t^\e_\delta})^2+\e}\right)^2 \, dx+O(\sqrt{\e})\\
    =2\pi^2L\int_0^1\left(\frac{N-M_1-\alpha/2\pi+(\e/L) N}{(\rho^\e_{t^\e_\delta})^2+\e/L}\right)^2 \, dx+O(\sqrt{\e})\\=2\pi^2L{(N-M_1-\alpha/2\pi)}^2+O(\sqrt{\e}).
\end{multline}
It then follows from \eqref{eq:b3} that
\begin{equation*}
    E_\e(u^\e_{t^\e_\delta})\geq2\pi^2L{(N-M_1-\alpha/2\pi)}^2+c(\delta)+O(\sqrt{\e}).
\end{equation*}
It is clear, however, that one can select a positive $\delta$ sufficiently small, and then an $\e_h>0$ such that the last expression exceeds $\Lambda_h$ whenever $\e<\e_h$.
\end{proof}

The energy threshold provided by Theorem \ref{thm:sublevel} leads to a straight-forward application of the Mountain Pass Theorem to establish 
saddle points for $E_\e$.

\begin{Theorem}
\label{MPT} For every positive integer $M$ and $\alpha\in [0,2\pi)$ there exists a critical point $v_\eps$ of $E_\e$ within the class $\mathcal{A}_\alpha$. Furthermore, the corresponding critical value $E_\e(v_\e)$ satisfies the asymptotic condition
\begin{equation}
 E_\e(v_\e)\to 2\pi^2L{(N-M-\alpha/2\pi)}^2+\frac{2\sqrt2}3\quad\mbox{as}\;\e\to 0.   \label{asympcrit}
\end{equation}
\end{Theorem}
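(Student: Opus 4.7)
The plan is to invoke the Mountain Pass Theorem on $\mathcal{A}_\alpha$, with $u_{\e,M}$ as one endpoint and an auxiliary bubble configuration $w_\e$ as the second. Writing $A_M:=2\pi^2 L(N-M-\alpha/2\pi)^2$, so that $E_\e(u_{\e,M})\to A_M$ by \eqref{eq:ls}, I take $w_\e=\rho_\e e^{i\theta_\e}$, where $\rho_\e$ is a Modica--Mortola-type profile pinched to $0$ at an interior point $x_0\in(0,1)$, $\theta_\e$ has slope $2\pi N$ on both sides of $x_0$, and the phase jump at $x_0$ is chosen so that $w_\e(1)=e^{i\alpha}$. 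Since $\rho_\e(x_0)=0$, a direct computation from $\mathcal{T}(u)=\rho^2\theta'$ shows that the twist contribution to $E_\e(w_\e)$ vanishes in the $\e\to0$ limit, so $E_\e(w_\e)\to\tfrac{2\sqrt{2}}{3}$. Let $\Gamma_\e$ denote the continuous paths $\gamma:[0,1]\to\mathcal{A}_\alpha$ with $\gamma(0)=u_{\e,M}$ and $\gamma(1)=w_\e$, and set
\begin{equation*}
c_\e:=\inf_{\gamma\in\Gamma_\e}\max_{t\in[0,1]}E_\e\bigl(\gamma(t)\bigr).
\end{equation*}

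The lower bound $\liminf_{\e\to0}c_\e\geq A_M+\tfrac{2\sqrt{2}}{3}$ is obtained by the argument underlying Theorem~\ref{thm:sublevel}. Although $w_\e$ vanishes at $x_0$, the only features of that proof I require are (a) the starting endpoint is non-vanishing with winding $M$ and (b) the minimum of $|\gamma(t)|$ dips below any prescribed $\delta\in(0,1/2)$ at some $t\in(0,1)$; both persist since $\rho_\e(x_0)=0$. At the first moment $t^\e_\delta$ for which $\min_x |\gamma(t^\e_\delta)(x)|=\delta$, the configuration is still non-vanishing and has winding equal to that of $u_{\e,M}$, namely $M$, so the proof of Theorem~\ref{thm:sublevel} estimates $E_\e(\gamma(t^\e_\delta))\geq A_M+c(\delta)+O(\sqrt\e)$ with $c(\delta)\to\tfrac{2\sqrt{2}}{3}$ as $\delta\to 0$; letting $\e,\delta\to 0$ yields the claim.

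For the matching upper bound $\limsup_{\e\to0}c_\e\leq A_M+\tfrac{2\sqrt{2}}{3}$, I exhibit an explicit path $\gamma^\e\in\Gamma_\e$ in two stages. In Stage (i) the Modica--Mortola bubble at $x_0$ is opened while the phase is held equal to $\theta_{\e,M}(x)=(2\pi M+\alpha)x$, using the recovery-sequence construction from Theorem~\ref{gconv}; the peak energy of this stage is $A_M+\tfrac{2\sqrt{2}}{3}+o(1)$. In Stage (ii), with the bubble held open at $x_0$ (so that a non-$2\pi\mathbb{Z}$ phase jump there is compatible with the $H^1$ regularity of $u$, because $u(x_0)=0$), the phase slope $b(t)$ is linearly interpolated from $2\pi M+\alpha$ down to $2\pi N$; the boundary constraint $u(1)=e^{i\alpha}$ forces the compensating phase jump $\Delta(t)$ at $x_0$ to satisfy $b(t)+\Delta(t)=2\pi M+\alpha$, so $\Delta(t)$ evolves continuously from $0$ to $2\pi(M-N)+\alpha$, matching the jump of $w_\e$ at $t=1$. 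The twist energy along Stage (ii) is $(1-t)^2 A_M$, monotonically decreasing to $0$, while the Modica--Mortola contribution stays $\tfrac{2\sqrt{2}}{3}+o(1)$, so the total energy never exceeds $A_M+\tfrac{2\sqrt{2}}{3}+o(1)$.

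The Palais--Smale condition at level $c_\e$ for $E_\e$ is standard: PS sequences are uniformly bounded in $H^1((0,1);\R^2)$ by coercivity of the $\int\tfrac{\e}{2}|u'|^2$ term, and the compact embedding $H^1\hookrightarrow C^{0,1/2}$ upgrades weak to strong convergence through the Euler--Lagrange equation. When $A_M>0$, equivalently $(M,\alpha)\neq(N,0)$, the strict inequality $c_\e>\max(E_\e(u_{\e,M}),E_\e(w_\e))$ required by the classical MPT holds, and the theorem yields a critical point $v_\e\in\mathcal{A}_\alpha$ with $E_\e(v_\e)=c_\e$; combined with the sandwich of the two bounds this gives \eqref{asympcrit}. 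The principal obstacle is orchestrating Stage (ii): the modulus, phase slope, and phase jump at $x_0$ must evolve in concert so that $u\in H^1$ and $u(1)=e^{i\alpha}$ hold throughout, while the total energy stays below $A_M+\tfrac{2\sqrt{2}}{3}+o(1)$. The degenerate case $(M,\alpha)=(N,0)$, in which $A_M=0$ and the strict inequality collapses, is handled by a refinement of the MPT that permits equality at the endpoints.
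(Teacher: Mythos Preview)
Your argument is correct and follows the same Mountain Pass strategy as the paper: both invoke Theorem~\ref{thm:sublevel} for the lower bound $\liminf c_\e\ge A_M+\tfrac{2\sqrt2}{3}$, exhibit an explicit path that opens a Modica--Mortola bubble and then rotates the phase for the matching upper bound, and verify Palais--Smale by the standard Allen--Cahn argument with the twist term carried along via weak/uniform convergence.

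The one substantive difference is the choice of second endpoint. The paper runs the mountain pass between $U_M=e^{i(2\pi M+\alpha)x}$ and $U_{M+1}$, so its explicit path has three steps: open the bubble, shift the winding by one, then \emph{close} the bubble. You instead terminate at the open-bubble configuration $w_\e$ with phase slope $2\pi N$, so your path has only two stages and drives the twist energy all the way to $0$ rather than to $A_{M+1}$. Your choice has the advantage that $E_\e(w_\e)\to\tfrac{2\sqrt2}{3}<A_M+\tfrac{2\sqrt2}{3}$ whenever $A_M>0$, so the mountain-pass geometry holds uniformly in $M$; the paper's endpoint $U_{M+1}$ has energy below $U_M$ only for $M<N$, and one must tacitly swap to $U_{M-1}$ when $M>N$. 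Conversely, the paper's pair of non-vanishing endpoints sidesteps the borderline case you flag at $(M,\alpha)=(N,0)$, where $E_\e(w_\e)$ and $c_\e$ both tend to $\tfrac{2\sqrt2}{3}$ and the strict inequality in the classical Mountain Pass Theorem becomes delicate.
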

\begin{proof}
 First, we note that the arguments in Theorem \ref{thm:sublevel} can easily be adapted with the same energy threshold to a curve that connects the states $U_{M}:=e^{i(2\pi M+\alpha)x}$ and $U_{M_1}:=e^{i(2\pi M_1+\alpha)x}$ for any two positive integers $M$ and $M_1$. Fixing $\e>0$ one defines the potential critical value $c_\e$ via
\[
c_\e:=\inf_{\gamma\in \Gamma_\e}\;\max_{t\in [0,1]}\;E_\e(\gamma(t)),
\]
where $\Gamma_\e$ is the set of continuous curves $\gamma$ such that
\begin{equation}\gamma:[0,1]\to \mathcal{A}_\alpha\quad\mbox{with}\;\gamma(0)=U_{M}\;\mbox{and}\;\gamma(1)=U_{M+1}.\label{path1}\end{equation}
Beginning with the case $M<N$ we have that
\[
E_\e(U_{M})\sim 
2\pi^2L{(N-M-\alpha/2\pi)}^2
\]
while
\[
E_\e(U_{M+1})\sim 
2\pi^2L{(N-M-1-\alpha/2\pi)}^2,
\]
so that, in particular, $E_\e(U_{M+1})<E_\e(U_{M})$.
Then the implication of Theorem \ref{thm:sublevel} is that $E_\e$ exhibits the requisite mountain pass structure since for any $h>0$ one has 
\begin{equation}
\max_{t\in [0,1]}\;E_\e(\gamma(t))\geq 
2\pi^2L{(N-M-\alpha/2\pi)}^2+\frac{2\sqrt2}3-h>E_\e(U_{M})>E_\e(U_{M+1})\label{mps}
\end{equation}
for any $\gamma\in \Gamma_\e$, provided $\e$ is sufficiently small.

Subtracting off the boundary conditions by writing any competitor $u\in \mathcal{A}_{\alpha}$ as $u=\tilde{u}+\ell(x)$ where
$
\ell(x):=1+x\big(e^{i\alpha}-1\big),
$
we can work in the space $H^1_0\big((0,1)\big)$. It remains to verify the Palais-Smale condition. Under assumptions
\[
E_\e(\tilde{u}_k+\ell)<C_0\quad\mbox{and}\quad
\norm{\delta E_\e(\tilde{u}_k+\ell)}\to 0\quad\mbox{as}\;k\to 0,
\]
for $\{\tilde{u}_k\}\subset H^1_0\big((0,1)\big)$, it immediately follows from the uniform energy bound that after passing to a subsequence (with notation suppressed), one has 
\begin{equation}
\tilde{u}_k\rightharpoonup \tilde{u}_{\e,M}\;\mbox{weakly in}
\;H^1\quad\mbox{and}\quad
\tilde{u}_k\to \tilde{u}_{\e,M}\;\mbox{uniformly}\quad\mbox{as}\;k\to \infty,\label{ps}
\end{equation}
for some $\tilde{u}_{\e,M}\in H^1_0$.
Then one writes $E_\e$ as the sum of the Allen-Cahn energy and the twist energy $I(\tilde{u}):=\frac{L}{2}\int_0^1 \mathcal{T}(\tilde{u}+\ell)\,dx$ and one follows the standard proof used to verify that the Allen-Cahn functional satisfies Palais-Smale (see e.g. \cite{JS}, Prop. 3.3). The key step in upgrading the weak $H^1$ convergence to strong convergence is writing out the difference
$\delta E_\e(\tilde{u}_k+\ell;\tilde{u}_k)-\delta E_\e(\tilde{u}_k+\ell;\tilde{u}_{\e,M})$, and in light of the convergences \eqref{ps}, the extra twist terms in this difference pose no additional trouble. We conclude from the Mountain Pass Theorem that a critical point $v_{\e,M}:=\tilde{u}_{\e,M}+\ell$ exists with $E_\e(v_{\e,M})=c_\e$. 

Now we turn to the proof of condition \eqref{asympcrit}. Again, we know from Theorem \ref{thm:sublevel} that for any $h>0$, one has the inequality \eqref{mps} for $\e$ small enough, so that 
\begin{equation}
\liminf c_\e\geq 
2\pi^2L{(N-M-\alpha/2\pi)}^2+\frac{2\sqrt2}3\label{liminf}
\end{equation}
On the other hand, we can build a continuous path $\gamma^\e:[0,1]\to\mathcal{A}_\alpha$ as follows:\\
1) Writing $U_{M}=e^{i\theta_{M}}$ as $t$ varies between $0$ and say $1/3$, the modulus gradually depresses towards $0$ in a small interval of $x$-values about $x=1/2$ via the standard Modica-Mortola construction, so that $\gamma^\e(1/3)\equiv 0$ for say $1/2-\e^2\leq x\leq 1/2+\e^2$. For this interval of $t$-values one leaves the phase $\theta_{M}$ unchanged. As we have previously noted, such a procedure can be executed with
\[
E_\e(\gamma^\e(t))\leq 2\pi^2L{(N-M-\alpha/2\pi)}^2+\frac{2\sqrt2}3+O(\e)\;\mbox{for}\;t\in [0,1/3).
\]
Of course along the subinterval where the modulus vanishes, the value of the phase is irrelevant but we find it convenient in this exposition to define the phase throughout the whole interval for each function $\gamma^\e(t)$.\\
2) At $t=1/3$ we introduce a removable discontinuity in the phase $\theta_{M}$ at $x=1/2$ where the modulus vanishes. Then, as $t$ increases from $t=1/3$ to $t=2/3$, one takes the phase to gradually converge to $\theta_{M+1}$ and $\theta_{M+1}-2\pi$ on $[0,1/2)$ and $(1/2,1]$, respectively, while leaving the modulus unchanged. Since $M<N$, the $O(1)$ energy contribution of the twist will decrease under this process. As $t$ approaches $2/3$, we converge to $U_{M+1}$ except for the small  interval about $x=1/2$ where the modulus is depressed.\\
3) In the time interval $t\in [2/3,1]$ one smoothly raises the modulus back up to $1$ on $1/2-\e^2\leq x\leq 1/2+\e^2$ so that at $t=1$ one has $\gamma^\e(1)=U_{M+1}$.
Again, this process decreases energy so that throughout the interval $0\leq t\leq 1$ one maintains the estimate
\[
E_\e(\gamma^\e(t))\leq 2\pi^2L{(N-M-\alpha/2\pi)}^2+\frac{2\sqrt2}3+O(\e).
\]
Hence, we conclude that
\[
\limsup{c_\e}\leq \limsup{E_\e(\gamma^\e)}\leq
2\pi^2L{(N-M-\alpha/2\pi)}^2+\frac{2\sqrt2}3
\]
and together with \eqref{liminf} we arrive at \eqref{asympcrit}.
\end{proof}
\section{The case of unbounded twist}

Finally, we consider the situation of an energy that encourages more and more twist in the $\e\to 0$ limit. To this end, we replace $N$ in \eqref{Eeps} by $N_\eps:=1/\e^\beta$ where $\beta$ is a positive number chosen less than $1/2$ in order to retain an energy bound that is uniform in $\e$. Thus, we study global and local minimizers of an energy $\tilde{E}_\e$ given by
\begin{equation}
\tilde{E}_\e(u)=\int_0^1 \frac{\e}{2}\abs{u'}^2+\frac{1}{4\e}(\abs{u}^2-1)^2+
\frac{L}{2}(u_1\,u_2'-u_2\,u_1'-2\pi \e^{-\beta})^2\,dx,\label{tEeps}
\end{equation}
again subject to the boundary conditions
$u(0)=1,\; u(1)=e^{i\alpha}$ for some $\alpha\in [0,2\pi)$.

Of course existence of global minimizers for each $\e>0$ follows as in Theorem \ref{GE}. One also can establish a version of the local minimizer result Theorem \ref{constrainedmins}:

\begin{Theorem}\label{locals}
Fix any positive integer $m$ and any $\alpha\in [0,2\pi)$. Then there exists an $\e_0>0$ such that for all $\e<\e_0$ there exist non-vanishing local minimizers $u_{\e,\pm}=\rho_{\e,\pm} e^{i\theta_{\e,\pm}}$ of $\tilde{E}_\e$ within the class $\mathcal{A}_\alpha$ such that
\begin{eqnarray}
&&
\limsup\frac{\norm{{\rho_{\e,\pm}}-1}_{L^{\infty}(0,1)}}{\e}<\infty\;\mbox{as}\;\e\to 0
\label{suprho1}\\
&&\mbox{and}\\
&&\theta_{\e,\pm}'\to 2\pi\left(\floor*{\e^{-\beta}}\pm m\right) +\alpha\;\mbox{as}\;\e\to 0\;\mbox{uniformly in}\;x\in [0,1].\label{thetaprime1}
\end{eqnarray}
\end{Theorem}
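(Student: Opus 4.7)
The plan is to mimic the constrained minimization strategy from the proof of Theorem \ref{constrainedmins}, tracking carefully how the new twist target $N_\e := \e^{-\beta}$ enters the asymptotic analysis. Set $M_\e^\pm := \floor{\e^{-\beta}} \pm m$, switch to polar coordinates $u = \rho e^{i\theta}$ with $\rho(0) = \rho(1) = 1$, $\theta(0) = 0$, $\theta(1) = 2\pi M_\e^\pm + \alpha$, and for fixed $\rho_0 \in (0,1)$ minimize $\tilde{E}_\e$ over $\mathcal{H}_{\rho_0} \times \mathcal{H}_{M_\e^\pm,\alpha}$; the direct method yields a constrained minimizer $(\rho_{\e,\pm},\theta_{\e,\pm})$. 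Testing against the linear competitor $(1, (2\pi M_\e^\pm + \alpha)x)$ produces the upper bound
\[
\tilde\mu_{\e,\pm} \leq \frac{\e}{2}(2\pi M_\e^\pm + \alpha)^2 + \frac{L}{2}\bigl(2\pi(M_\e^\pm - N_\e) + \alpha\bigr)^2 = O(\e^{1-2\beta}) + O(1),
\]
the first term being controlled because $\beta < 1/2$ and the second because $M_\e^\pm - N_\e = \pm m - \{N_\e\}$ is bounded. Thus $\tilde\mu_{\e,\pm}$ is uniformly bounded in $\e$, and the standard Modica–Mortola inequality yields $\int_0^1 |1 - \rho_{\e,\pm}^2|\,dx = O(\sqrt \e)$.

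Next, the Euler–Lagrange equation in $\theta$ is solved exactly as in \eqref{tMform}, giving
\[
\theta'_{\e,\pm} = \frac{2\pi N_\e L \rho_{\e,\pm}^2 + C_\e}{L \rho_{\e,\pm}^4 + \e \rho_{\e,\pm}^2}
\]
for some constant $C_\e$, and the algebraic identity leading to \eqref{diff} carries through unchanged. The only new feature is that the prefactor $\Lambda_\e$ now inherits a factor of $N_\e$, so $|\Lambda_\e| = O(\e^{-\beta})$ rather than $O(1)$; combined with $\int_0^1 |1-\rho_{\e,\pm}^2|\,dx = O(\sqrt\e)$ this bounds $\int_0^1\bigl|\theta'_{\e,\pm} - (2\pi N_\e L + C_\e)/(L+\e)\bigr|\,dx$ by $O(\e^{1/2-\beta})$, which is $o(1)$ precisely because $\beta < 1/2$. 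Matching $\int_0^1 \theta'_{\e,\pm}\,dx = 2\pi M_\e^\pm + \alpha$ then yields $C_\e = 2\pi L (M_\e^\pm - N_\e) + L\alpha + O(\e^{1/2-\beta})$, which is bounded uniformly in $\e$.

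With $C_\e$ controlled, the proof that the obstacle $\rho \geq \rho_0$ is inactive for small $\e$ proceeds exactly as in the Claim \eqref{claim} of Theorem \ref{constrainedmins}. At a hypothetical interior contact point $x_0$, obstacle regularity together with \eqref{rhoELeqn} forces
\[
\e \rho''_{\e,\pm}(x_0) = \e \rho_0 \bigl(\theta'_{\e,\pm}(x_0)\bigr)^2 + \frac{1}{\e}(\rho_0^2 - 1)\rho_0 - 2L\bigl(2\pi N_\e - \rho_0^2 \theta'_{\e,\pm}(x_0)\bigr)\rho_0 \theta'_{\e,\pm}(x_0),
\]
with $\e(\theta'_{\e,\pm})^2 = O(\e^{1-2\beta})$ and the twist term $O(\e^{-\beta})$ after the cancellation $2\pi N_\e - \rho^2 \theta' = (2\pi N_\e \e - C_\e)/(L\rho^2+\e) = O(1)$; both contributions are dwarfed by the Ginzburg–Landau term $\e^{-1}(\rho_0^2-1)\rho_0 < 0$ because $\beta < 1/2$, producing a contradiction with $\rho''_{\e,\pm}(x_0) \geq 0$. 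The contact-interval case is ruled out by testing \eqref{rhoEL} against a non-negative bump in the contact set exactly as before. Once the constraint is inactive, $u_{\e,\pm} := \rho_{\e,\pm} e^{i\theta_{\e,\pm}}$ is an $H^1$-local minimizer of $\tilde{E}_\e$, and the same max/min argument used at the end of Theorem \ref{constrainedmins} — applied to the formula for $\theta'_{\e,\pm}$ above — delivers \eqref{suprho1} and \eqref{thetaprime1}. The principal technical obstacle is the bookkeeping: every quantity that was uniformly bounded in $\e$ in Theorem \ref{constrainedmins} must be reexamined in the presence of $N_\e \to \infty$, and the restriction $\beta < 1/2$ is exactly what keeps all the critical error terms subcritical.
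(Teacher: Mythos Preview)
Your proposal is correct and follows essentially the same approach as the paper, which merely sketches the argument by defining $M_\e^\pm=\floor{\e^{-\beta}}\pm m$, noting that $\beta<1/2$ secures the uniform energy bound analogous to \eqref{test} and the bound on $C_\e$ analogous to \eqref{Ceform}, and then asserting that ``the rest of the argument is unchanged.'' You have supplied exactly the bookkeeping the paper omits---tracking how $N_\e\to\infty$ inflates $\Lambda_\e$ to $O(\e^{-\beta})$ and how the $O(\sqrt{\e})$ error in \eqref{diff} becomes $O(\e^{1/2-\beta})$---so your write-up is, if anything, more complete than the paper's own proof.
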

\begin{proof}
The proof follows along similar lines as the proof of Theorem \ref{constrainedmins}. First define $M^{\pm}_\e=\floor*{\e^{-\beta}}\pm m$. Then one writes competitors for constrained minimization of $\tilde{E}_\e=\tilde{E}_\e(\rho,\theta)$ in polar form $(\rho,\theta)$ where $\rho$ satisfies \eqref{mtH} and $\theta(0)=0,\;\theta(1)=2\pi M^{\pm}_\e+\alpha$. The requirement $\beta<1/2$ assures that a version of the uniform energy bound \eqref{test}
 still holds. Similarly, a uniform bound on the constant of integration $C_\e$ is achievable as in \eqref{Ceform}, with the bound now depending on $m$. The rest of the argument is unchanged.\end{proof}
 Next we consider the asymptotic behavior as $\e \to 0$ of $\tilde{E}_\e$. Due to the fact that $\e^{-\beta} \to \infty$ as $\e \to 0$, we expect that the elements of an energy bounded sequence will oscillate more and more rapidly as $\e \to 0$. 
\begin{theorem}\label{macro}
Suppose that for some $0 < \beta < 1/2$, $\{u_\e \}\subset \mathcal{A}_\alpha$ satisfies the uniform energy bound
\begin{equation}\label{unibound}
\tilde{E}_\e(u_\e) \leq C_0 < \infty.
\end{equation}
\begin{comment}
then 
\begin{equation}\label{weaku}
    u_\e \rightharpoonup 0 \textit{ in } L^2((0,1)).
\end{equation}
\end{comment}
Then $\abs{u_\e}^2\to 1$ in $L^2(0,1)$ and there exists a finite set $J' \subset (0,1)$ and a subsequence $\{u_{\e_{\ell}}\}$ such that for every compact set $K\subset \subset (0,1)\setminus J'$, there exists an $\e_0(K)>0$ such that for every $\e_{\ell} <\e_0$, one has $|u_{\e_{\ell}}|>0$ on $K$ and there is a lifting whereby $u_{\e_{\ell}}=\rho_{\e_{\ell}} e^{2\pi i v_{\e_{\ell}}/\e_{\ell}^\beta}$, with 
\begin{equation}\label{strongv}
    v_{\e_{\ell}} \to x \textit{ strongly in }H^1_{loc}((0,1) \setminus J').
\end{equation}
\begin{comment}
Moreover, if there exists $C_1>1$ such that for any $K \subset \subset (0,1)\setminus J'$ and $\ell> \ell(K) $,
\begin{equation}\label{abovebelow}
    \frac{1}{C} \leq \| v_{\e_\ell}'\|_{L^\infty(K)} \leq C,
\end{equation}
then
\end{comment}
In addition, we have
\begin{equation}\label{weak}
u_{\e}\rightharpoonup 0\textit{ weakly in }L^2((0,1);\mathbb{C}),
\end{equation}
so that the entire sequence converges weakly to $0$.
\end{theorem}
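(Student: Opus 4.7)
The plan is to follow the structure of the proof of Theorem \ref{cpt} with the rescaled lifting $v_\e := \e^\beta \theta_\e /(2\pi)$, and then deduce \eqref{weak} by factoring the high-frequency carrier wave $e^{2\pi i x/\e^\beta}$ out of $u_{\e_\ell}$ and appealing to the Riemann--Lebesgue lemma.

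First, $|u_\e|^2 \to 1$ in $L^2(0,1)$ is immediate from the potential term in \eqref{tEeps} and the uniform bound \eqref{unibound}. Next, I repeat verbatim the construction of bad intervals $B_\e$ collapsing to a finite limit set $J'$ from the proof of Theorem \ref{cpt}, noting that neither the Modica--Mortola lower bound \eqref{mmest} nor the $L^\infty$ bound \eqref{linfbound} on $\rho_\e$ is affected by the $\e^{-\beta}$ appearing in the twist. On any compact $K \subset\subset (0,1)\setminus J'$, for $\e_\ell$ sufficiently small we have $K\cap B_{\e_\ell}=\emptyset$, so a lifting $u_{\e_\ell}=\rho_{\e_\ell}e^{i\theta_{\e_\ell}}$ exists on $K$. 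The twist energy then translates to
\[
\|\rho_{\e_\ell}^2 v_{\e_\ell}'-1\|_{L^2(K)}^2 = \frac{\e_\ell^{2\beta}}{(2\pi)^2}\,\|\rho_{\e_\ell}^2\theta_{\e_\ell}'-2\pi\e_\ell^{-\beta}\|_{L^2(K)}^2 \leq C\,\e_\ell^{2\beta},
\]
and combining this with $\rho_{\e_\ell}^2\in[(1-2^{-q})^2,M^2]$ on $K$ and $\|\rho_{\e_\ell}^2-1\|_{L^2}=O(\e_\ell^{1/2})$ yields $\|v_{\e_\ell}'-1\|_{L^2(K)}=O(\e_\ell^\beta)$, where the $\e_\ell^\beta$ term dominates because $\beta<1/2$. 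Adjusting the $2\pi$-ambiguity in $\theta_{\e_\ell}$ on each connected component of $(0,1)\setminus J'$ so that $|v_{\e_\ell}(x_0)-x_0|\leq \e_\ell^\beta/2$ at a reference point $x_0$, the fundamental theorem of calculus yields $\|v_{\e_\ell}-x\|_{H^1(K)}=O(\e_\ell^\beta)$, establishing \eqref{strongv}.

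The main obstacle is the weak convergence \eqref{weak}, which is subtle because $u_{\e_\ell}$ is essentially a product of a strongly convergent modulus-type factor and an oscillatory factor converging only weakly. The key observation is that the $O(\e_\ell^\beta)$ rate just obtained makes the rescaled remainder $W_{\e_\ell}:=(v_{\e_\ell}-x)/\e_\ell^\beta$ uniformly bounded in $H^1(K)$, so by Rellich, along a further subsequence, $W_{\e_\ell}\to W$ in $L^p(K)$ for every finite $p$. Factoring
\[
u_{\e_\ell}=\bigl(\rho_{\e_\ell}\,e^{2\pi i W_{\e_\ell}}\bigr)\,e^{2\pi i x/\e_\ell^\beta},
\]
the pointwise bound $|\rho_{\e_\ell}e^{2\pi i W_{\e_\ell}}-e^{2\pi i W}|\leq|\rho_{\e_\ell}-1|+2\pi|W_{\e_\ell}-W|$ together with $\rho_{\e_\ell}\to 1$ and $W_{\e_\ell}\to W$ in $L^2(K)$ yields strong $L^2(K)$ convergence of the first factor to $e^{2\pi i W}$. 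For any $\phi\in L^2((0,1))$, H\"older then gives $\phi\,\rho_{\e_\ell}e^{2\pi i W_{\e_\ell}}\to \phi\,e^{2\pi i W}$ in $L^1(K)$, and the classical Riemann--Lebesgue lemma applied against $e^{2\pi i x/\e_\ell^\beta}$ sends $\int_K\phi u_{\e_\ell}\,dx\to 0$. The contribution from $K^c$ is controlled by $\|\phi\|_{L^2}\cdot M\,|K^c|^{1/2}$ using the $L^\infty$ bound on $\rho_\e$, and is made arbitrarily small by exhausting $(0,1)\setminus J'$. Since the subsequential weak limit is always $0$ independently of the extracted $W$, the standard double-subsequence argument promotes this to weak convergence of the entire original sequence, completing \eqref{weak}.
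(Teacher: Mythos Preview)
Your argument is correct and arrives at the same conclusions as the paper, but the proof of the weak convergence \eqref{weak} is organized quite differently. The paper does not exploit the quantitative rate $\|v_{\e_\ell}-x\|_{H^1(K)}=O(\e_\ell^\beta)$; instead it uses only the qualitative convergence $v_{\e_\ell}'\to 1$ in $L^2_{\rm loc}$, upgrades this to almost-uniform convergence via Egorov's theorem, and then, on a compact set where $v_{\e_\ell}'\to 1$ uniformly, treats $v_{\e_\ell}$ as a diffeomorphism and performs a change of variables to compare $\int e^{2\pi i v_{\e_\ell}/\e_\ell^\beta}\overline{\varphi}$ directly against the explicit oscillatory integral $\int e^{2\pi i x/\e_\ell^\beta}\overline{\varphi}$. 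Your route is more elementary: by dividing through by $\e_\ell^\beta$ you obtain an $H^1$-bounded sequence $W_{\e_\ell}$, Rellich gives a strongly convergent subsequence, and the factorization $u_{\e_\ell}=(\rho_{\e_\ell}e^{2\pi i W_{\e_\ell}})\,e^{2\pi i x/\e_\ell^\beta}$ cleanly separates a strongly $L^2$-convergent amplitude from a pure carrier wave, to which Riemann--Lebesgue applies directly. This avoids both Egorov and the change-of-variables computation, at the price of using a little more information (the rate, which is in any case immediate from the twist bound). One small correction: on $(0,1)\setminus B_{\e_\ell}$ the guaranteed lower bound is $\rho_{\e_\ell}\ge 2^{-q}$, not $1-2^{-q}$; this does not affect your estimates, since any positive lower bound suffices.
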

\begin{proof}[Proof of \Cref{macro}]
%We first prove \eqref{strongv} and then use it to show \eqref{weaku}.
By the same argument as the one leading up to \eqref{notalot}, we can identify finite unions of open intervals $B_\e$ such that on $(0,1) \setminus B_\e$, $\rho_\e \geq 2^{-q}$. Also, by restricting to a subsequence $\{\e_{\ell}\}$, we can assume that the sets $B_{\e_{\ell}}$ collapse to a finite set of points $J'$. We may therefore define liftings $\theta_{\e_{\ell}}$ such that on each of the finitely many intervals comprising $(0,1)\setminus B_{\e_{\ell}}$, the value of $\theta_{\e_{\ell}}$ at the left endpoint of an interval is greater than the value of $\theta_{\e_{\ell}}$ at the right endpoint of the previous interval, with a difference of no more than $2 \pi$. Also, we can without loss of generality suppose that $0$ is in the domain of $\theta_{\e_\ell}$ and set $\theta_{\e_\ell}(0)=0$. If we define
\begin{equation}\label{vdef}
    v_{\e_{\ell}}:= \frac{\e_{\ell}^\beta \theta_{\e_{\ell}}}{2\pi},
\end{equation}
then we may rewrite the twist term in terms of $v_{\e_{\ell}}$ and use the uniform energy bound to conclude that
\begin{equation}\label{twbound}
\frac{1}{2}\int_{(0,1)\setminus B_{\e_{\ell}}}  \frac{L}{\e_{\ell}^{2\beta}} \left( \rho_{\e_{\ell}}^2 v_{\e_{\ell}}'-1\right)^2 \leq C_0.  
\end{equation}
Furthermore, due to the choice of $\theta_{\e_{\ell}}$ on each subinterval of $B_{\e_{\ell}}$, we see that
\begin{equation}\label{vjump}
    \textit{the value of $v_{\e_{\ell}}$ jumps by no more than $\e_{\ell}^\beta$}
\end{equation}
from the right endpoint of one subinterval to the left endpoint of the subsequent one. After passing to a further subsequence (with notation suppressed), we conclude from \eqref{twbound} that for any $K_1 \subset \subset [0,1]\setminus J'$,
\begin{equation}\label{vprime}
    v_{\e_{\ell}}' \to 1 \textup{ in }L^2(K_1).
\end{equation}
From \eqref{vjump}, \eqref{vprime}, and the condition $v_{\e_{\ell}}(0)=0$, we deduce that
\begin{equation}\label{ltwov}
    v_{\e_{\ell}} \to x \textup{ in $L^\infty(K_1)$}.
\end{equation}
As in \eqref{nested}, we may repeat this procedure on a nested sequence of compact sets to arrive at a subsequence (still denoted by $v_{\e_{\ell}}$) such that
\begin{equation}\notag
    v_{\e_{\ell}} \to x \textup{ in }H^1_{loc}((0,1) \setminus J').
\end{equation}

To prove \eqref{weak}, we must demonstrate that for any $w\in L^2((0,1);\mathbb{C})$,
\begin{equation}\label{needeq}
    \int_0^1 u_{\e} \overline{w} \,dx \to 0,
\end{equation}
where the bar denotes complex conjugation. Let us first obtain a subsequence $u_{\e_\ell}$ satisfying \eqref{vprime} and \eqref{ltwov} such that $\lim_{\ell \to \infty}\left|\int u_{\e_\ell} \overline{w} \right|$ achieves the limit superior. By Egorov's theorem, after restricting to a subsequence such that $v_{\e_\ell}' \to 1$ almost everywhere, we can assume without loss of generality that $v_{\e_\ell}'\to 1$ almost uniformly on $(0,1)\setminus J'$. Also, if we approximate $w$ by smooth functions, it is enough to show that for any $K \subset \subset (0,1)\setminus J'$ on which $v_{\e_\ell}' \to 1$ uniformly,
\begin{equation}\label{smooth}
    \int_0^1 u_{\e_\ell} \overline{\varphi} \,dx = \int_0^1 \rho_{\e_\ell}e^{2\pi i v_{\e_\ell}/\e^{\beta}_\ell} \overline{\varphi} \,dx\to 0
\end{equation}
if $\varphi \in C_c^\infty(K;\mathbb{C})$. Since $\rho_{\e_\ell} \to 1$ in $L^2$, \eqref{smooth} would follow from the condition
\begin{equation}\notag
    \int_0^1 e^{2\pi i v_{\e_\ell}/\e^{\beta}_\ell}\overline{\varphi} \,dx \to 0,
\end{equation}
which we now prove. Because $v_{\e_\ell}'\to 1$ uniformly on $\mathrm{spt}\, \varphi$, $v_{\e_\ell}$ is a diffeomorphism of $\mathrm{spt}\, \varphi$ for large $\ell$. Thus for $\ell$ large enough, we can add and subtract $\int e^{2\pi i x/\e^{\beta}_\ell}\overline{\varphi}$ and then change variables to get
\begin{align}\notag
    \left|\int_{\mathrm{spt}\,\varphi} e^{2\pi i v_{\e_\ell}/\e^{\beta}_\ell}\overline{\varphi} \right| &\leq \left|\int_{\mathrm{spt}\,\varphi} e^{2\pi i x/\e^{\beta}_\ell}\overline{\varphi}  \right| + \left|\int_{\mathrm{spt}\,\varphi} e^{2\pi i x/\e^{\beta}_\ell}\overline{\varphi} - \int_{\mathrm{spt}\,\varphi} e^{2\pi i v_{\e_\ell}/\e^{\beta}_\ell}\overline{\varphi} \right| \\ \notag
    &=\left|\int_{\mathrm{spt}\,\varphi} e^{2\pi i x/\e^{\beta}_\ell}\overline{\varphi} \right| + \left|\int_{\mathrm{spt}\,\varphi} e^{2\pi i x/\e^{\beta}_\ell} \overline{\varphi} - \int_{v_{\e_\ell}(\mathrm{spt}\,\varphi)} e^{2\pi i x/\e^{\beta}_\ell}\frac{\overline{\varphi}(v_{\e_\ell}^{-1})}{ |v_{\e_\ell}'(v_{\e_\ell}^{-1}) |}\right| .
\end{align}
The first term goes to zero as $\ell \to \infty$ since $e^{2\pi i x/\e^{\beta}_\ell} \rightharpoonup 0$. In the second term we can add and subtract $\int_{v_{\e_\ell}(\mathrm{spt}\,\varphi)} e^{2\pi i x/\e^{\beta}_\ell}\overline{\varphi}(v_{\e_\ell}^{-1})$ and then change variables back, yielding
\begin{align}\notag
     \left|\int_{\mathrm{spt}\,\varphi} e^{2\pi i x/\e^{\beta}_\ell} \overline{\varphi} - \right. & \left. \int_{v_{\e_\ell}(\mathrm{spt}\,\varphi)} e^{2\pi i x/\e^{\beta}_\ell}\frac{\overline{\varphi}(v_{\e_\ell}^{-1})}{ |v_{\e_\ell}'(v_{\e_\ell}^{-1}) |}\right| \\ \notag
    &\leq \int_{v_{\e_\ell}(\mathrm{spt}\,\varphi)\cup\mathrm{spt}\,\varphi} \left|\varphi - \varphi(v_{\e_\ell}^{-1})  \right| + \int_{v_{\e_\ell}(\mathrm{spt}\,\varphi)} \frac{|\varphi(v_{\e_\ell}^{-1})|}{|v_{\e_\ell}'(v_{\e_\ell}^{-1}) |}\left| |v_{\e_\ell}'(v_{\e_\ell}^{-1}) |-1 \right| \\ \notag
    &\leq \|\varphi' \|_{L^\infty}\int_{v_{\e_\ell}(\mathrm{spt}\,\varphi)\cup \mathrm{spt}\,\varphi} |x-v_{\e_\ell}^{-1} | + \|\varphi \|_{L^\infty} \int_{\mathrm{spt}\,\varphi}|v_{\e_\ell}'-1 |,
\end{align}
which approaches zero by \eqref{vprime} and \eqref{ltwov}. 
\end{proof}

We would also like to describe the asymptotic behavior of minimizers in this regime by identifying a limiting problem. As demonstrated in the previous theorem, no meaningful limit can be extracted from simply looking at the sequence $\{u_\e\}$. Instead, we examine the ``microscale" behavior of $u_\e$ by eliminating the excess twist in the limit $\e \to 0$, in the sense that we obtain a limiting asymptotic problem for the rescaled functions
\begin{equation}\notag
    w(x):=u(x) e^{-2\pi i \floor{\e^{-\beta}}x}.
\end{equation}
Here $\floor{\e^{-\beta}}$ denotes the integer part of $\e^{-\beta}$.

In terms of $w$, the energy $\tilde{E}_\e(u)$ is given by
\begin{align}\notag
    \tilde{E}_\e(u) = F_\e(w) &:= \int_0^1 \frac{\e}{2}\abs{(we^{2\pi i \floor{\e^{-\beta}}x})'}^2+\frac{1}{4\e}(\abs{w}^2-1)^2\\ \notag
&\qquad+\frac{L}{2}(w_1\,w_2'-w_2\,w_1'+|w|^22\pi\floor{\e^{-\beta}}-2\pi \e^{-\beta})^2\,dx.\notag
\end{align}
The boundary conditions imposed on competitors for $F_\e$ are the same as those for $\tilde{E}_\e$. The asymptotic behavior of minimizers of $\tilde{E}_\e$ can therefore be completely understood in terms of $F_\e$, so we pursue an asymptotic limit for $F_\e$. Let us define the limiting functional as in \Cref{sec:gamma}, with slightly altered notation to emphasize the dependence on preferred twist:
\begin{align}
\notag
E_{0,A}(w) := \left\{ 
\begin{array}{cc}
\displaystyle     \frac{L}{2}\int_0^1\displaystyle (w_1\,w_2'-w_2\,w_1'-2\pi A)^2\,dx + \frac{2\sqrt{2}}{3}\mathcal{H}^0(J)     &\mbox{if}\; w\in H^1((0,1)\setminus J;S^1) \\ \\
+\infty & \mbox{ otherwise}.
\end{array}
\right.
\end{align}
We recall that $0$ and/or $1$ belongs to $J$ depending on whether or not the traces of $u$ satisfy the desired boundary conditions inherited from $E_\e$; that is, we include $x=0$ in $J$ only if $u(0^+)\not=1$ and we include $x=1$ in $J$ only if $u(1^-)\not=e^{i\alpha}$.
\begin{theorem}\label{gammablowup}
Let $0 < \beta < 1/2$ and suppose that for a subsequence $\{\e_\ell\}\to 0$ and some $A\in [0,1]$ we have
\begin{equation}\notag
    \e_\ell^{-\beta} - \floor{\e_\ell^{-\beta}} \to A.
\end{equation}
Then $\{F_{\e_\ell}\}$ $\Gamma$-converges to $E_{0,A}$ in $L^2\left((0,1);\mathbb{R}^2\right)$.
\end{theorem}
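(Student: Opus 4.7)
The strategy is to recognize that, modulo $o(1)$ corrections controlled precisely by the assumption $\beta<1/2$, the functional $F_\e(w)$ behaves like the functional of Theorem \ref{gconv} with the twist parameter $N$ replaced by $A$. Two expansions drive the proof. Writing $K:=\lfloor \e^{-\beta}\rfloor$, the kinetic energy integrand expands as
\[
\bigl|(we^{2\pi i K x})'\bigr|^2 = |w'|^2 + 4\pi^2 K^2 |w|^2 - 4\pi K(w_1 w_2' - w_2 w_1'),
\]
and the twist integrand rewrites as
\[
w_1 w_2' - w_2 w_1' + |w|^2 2\pi K - 2\pi\e^{-\beta} = \bigl(w_1 w_2' - w_2 w_1' - 2\pi A\bigr) + 2\pi K(|w|^2-1) - 2\pi\bigl((\e^{-\beta}-K)-A\bigr).
\]
Together with the Ginzburg-Landau bound $\||w|^2-1\|_{L^2}=O(\sqrt{\e})$, the key estimates $K^2\e=\e^{1-2\beta}\to 0$ and $K\sqrt{\e}=\e^{1/2-\beta}\to 0$ will force every spurious term in these expansions to vanish.

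For the liminf, suppose $w_\e\to w$ in $L^2$ with $\liminf F_\e(w_\e)<\infty$. A Young-inequality application to the cross term $-4\pi K(w_1 w_2' - w_2 w_1')$ yields $\int \tfrac{\e}{2}\bigl|(we^{2\pi i K x})'\bigr|^2\geq (1-\delta)\int \tfrac{\e}{2}|w'|^2-o(1)$ for any $\delta>0$, the error being controlled via $\int K^2\e|w|^2\leq K^2\e\|w\|_{L^2}^2=o(1)$. Adding the Ginzburg-Landau term and applying the standard Modica-Mortola inequality, then letting $\delta\to 0$, produces the jump contribution $\tfrac{2\sqrt{2}}{3}\mathcal{H}^0(J)$. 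For the twist, the compactness and lifting argument of Theorem \ref{cpt}, applied verbatim to $w_\e$, yields a decomposition $w_\e=\rho_\e e^{i\phi_\e}$ off a finite set $J'$ with $\phi_\e\rightharpoonup \phi$ in $H^1_{loc}((0,1)\setminus J')$ and $\rho_\e^2\to 1$ strongly. The twist expansion then gives weak convergence of its left-hand side to $\phi'-2\pi A$ in $L^2_{\mathrm{loc}}$ off $J'$, and weak lower semicontinuity of the $L^2$ norm supplies the bound $\tfrac{L}{2}\int (w_1 w_2' - w_2 w_1' - 2\pi A)^2\,dx$.

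For the limsup, I would adapt the recovery construction of Theorem \ref{gconv}: write $w=e^{i\phi}$ off $J$, extend $\phi$ continuously across each jump, and set $w_\e:=\rho_\e e^{i\phi}$ with $\rho_\e$ the standard Modica-Mortola transition profile. The critical adjustment is that the transition intervals (and the zero region $\{\rho_\e\equiv 0\}$) must be taken at the natural Modica-Mortola scale $O(\e)$ rather than the coarser $O(\sqrt{\e})$ scale used in the proof of Theorem \ref{gconv}; this ensures that the twist contribution on the transition is $O(\e^{-2\beta})\cdot O(\e)=O(\e^{1-2\beta})\to 0$. Off the transitions, $\rho_\e\equiv 1$ and the twist expansion gives $T_\e\to \phi'-2\pi A$ strongly in $L^2$, so $\int T_\e^2\to \int(\phi'-2\pi A)^2$. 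The Modica-Mortola energy converges to $\tfrac{2\sqrt{2}}{3}\mathcal{H}^0(J)$ as usual, and the cross and $K^2\e|w|^2$ terms from the kinetic expansion are trivially $o(1)$.

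The main obstacle is the bookkeeping of the factors of $K\sim \e^{-\beta}$: both the kinetic and twist expansions contain correction terms that are \emph{a priori} large, and only the restriction $\beta<1/2$ makes them $o(1)$. A secondary subtlety is the refinement of the transition width in the recovery construction: without shrinking from $O(\sqrt{\e})$ to $O(\e)$, the limsup estimate fails once $\beta\geq 1/4$.
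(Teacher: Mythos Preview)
Your proposal is correct and tracks the paper's argument closely: both reduce $F_\e$ to the structure of $E_\e$ from Theorem~\ref{gconv} modulo $o(1)$ corrections controlled by $\beta<1/2$, and both tighten the recovery transition width to $O(\e)$ so that the twist cost on the transition is $O(\e^{1-2\beta})\to 0$ (the paper records this only in its final sentence). One tactical difference worth noting: for the twist liminf you invoke weak $L^2$-convergence of the full integrand $\rho_\e^2\phi_\e'+2\pi K(\rho_\e^2-1)-2\pi(\e^{-\beta}-K)$ to $\phi'-2\pi A$ and then weak lower semicontinuity of the $L^2$ norm, whereas the paper expands the square and handles the cross terms one by one via \eqref{small1}--\eqref{small2} together with the lower bound $\rho_\e\geq 1-2^{-q}$ on compact sets. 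Your route is shorter but needs the justification that $\rho_\e^2\phi_\e'\rightharpoonup\phi'$ weakly in $L^2$ when $\rho_\e^2\to 1$ only strongly in $L^2$; this does follow once you use the uniform $L^\infty$ bound on $\rho_\e$ (from \eqref{linfbound}) to get $L^2$-boundedness of the product, test against bounded $g$, and pass to general $g\in L^2$ by density. (Minor: the cross term in your kinetic expansion should carry a $+$ sign rather than $-$, though this is immaterial to the Young-inequality estimate.)
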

We also have the compactness result
\begin{theorem}\label{cpt2}
If $\{u_\e \}_{\e>0}$ satisfies 
\begin{equation}\label{energybound2}
\tilde{E}_\e(u_\e)=F_\e(w_\e)\leq C_0 <\infty,
\end{equation}
and
\begin{equation}\label{Aconv}
    \e_\ell^{-\beta} - \floor{\e_\ell^{-\beta}} \to A
\end{equation}
for some $0 < \beta < 1/2$, then there exists a function $w\in H^1((0,1)\setminus J';S^1)$ where $J'$ is a finite, perhaps empty, set of points in $(0,1)$ such that along a subsequence $\e_{\ell}\to 0$ one has
\begin{equation}\label{modcon2}
u_{\e_\ell}e^{-2\pi i \floor{\e_\ell^{-\beta}}x}=w_{\e_\ell} \to w\;\mbox{in}\;L^2\big((0,1);\C\big).
\end{equation}
Furthermore, writing $w(x)=e^{i\theta(x)}$ for $\theta\in H^1((0,1)\setminus J')$, we have that for every compact set $K\subset\subset (0,1)\setminus J'$, there exists an $\e_0(K)>0$ such that for every $\e_\ell<\e_0$ one has $\abs{u_{\e_\ell}}=\abs{w_{\e_\ell}}>0$ on $K$ and there is a lifting whereby $u_{\e_\ell}(x)e^{-2\pi i \floor{\e_\ell^{-\beta}}x}=w_{\e_\ell}(x)=\rho_{\e_\ell}(x)e^{i\theta_{\e_\ell}(x)}$ on $K$, with
\begin{equation}
\theta_{\e_{\ell}}\rightharpoonup \theta\;\mbox{weakly in}\;H^1_{loc}\big((0,1)\setminus J'\big).\label{weakcon2}
\end{equation}
\end{theorem}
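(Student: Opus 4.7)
My plan is to adapt the proof of Theorem~\ref{cpt}, with the key new input being that the condition $\beta<1/2$ prevents the increasingly rapid preferred twist from destroying the $H^1$ control on the lifted phase. The fractional-part hypothesis \eqref{Aconv} plays no role in the compactness argument itself; it is only needed downstream to pin down the preferred twist $2\pi A$ of the limiting problem in Theorem~\ref{gammablowup}.

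Since $\abs{w_\e}=\abs{u_\e}=\rho_\e$, writing $w_\e=\rho_\e e^{i\theta_\e}$ in polar form on any interval where $w_\e$ is non-vanishing yields
\[
\abs{(w_\e e^{2\pi i\floor{\e^{-\beta}}x})'}^2 = (\rho_\e')^2 + \rho_\e^2(\theta_\e' + 2\pi\floor{\e^{-\beta}})^2 \geq (\rho_\e')^2.
\]
Hence the first two terms of $F_\e$ dominate $\int_0^1 \frac{\e}{2}(\rho_\e')^2 + \frac{1}{4\e}(\rho_\e^2-1)^2\,dx$, and the Modica-Mortola bookkeeping of Theorem~\ref{cpt} (in particular the chain \eqref{mmest}-\eqref{notalot}) applies verbatim: for each $q\geq 2$, the connected components of $\{\rho_\e < 1-2^{-q}\}$ on which $\rho_\e$ dips to $2^{-q}$ are finite in number uniformly in $\e$, and after a diagonal extraction these ``bad intervals'' collapse, along a subsequence, to a finite set $J' \subset [0,1]$.

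The main step is the $H^1_{loc}((0,1)\setminus J')$ estimate for the lifted phase $\theta_{\e}$. Expanding the twist term in polar form,
\[
w_{\e,1}w_{\e,2}'-w_{\e,2}w_{\e,1}'+\abs{w_\e}^2\,2\pi\floor{\e^{-\beta}}-2\pi\e^{-\beta} = \rho_\e^2\theta_\e' + 2\pi\floor{\e^{-\beta}}(\rho_\e^2-1) - 2\pi(\e^{-\beta}-\floor{\e^{-\beta}}),
\]
so the energy bound \eqref{energybound2} furnishes a uniform $L^2$ estimate on the left-hand side. The fractional-part piece is bounded by $2\pi$ in $L^\infty$, while the Modica-Mortola bound $\int_0^1 (\rho_\e^2-1)^2\,dx \leq 4\e C_0$ gives
\[
\int_0^1 \floor{\e^{-\beta}}^2(\rho_\e^2-1)^2\,dx \leq 4 C_0\,\e^{1-2\beta} \to 0 \ \mbox{as}\ \e\to 0,
\]
precisely because $\beta<1/2$. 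Consequently $\rho_\e^2\theta_\e'$ is bounded in $L^2((0,1))$ uniformly in $\e$, and on the good set where $\rho_\e^2\geq 4^{-q}$ division yields a corresponding $L^2$ bound on $\theta_\e'$.

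From here the conclusion follows exactly as in Theorem~\ref{cpt}: choose liftings so that $\norm{\theta_{\e_\ell}}_{L^\infty}$ is controlled on each component of the good set (adjusting by multiples of $2\pi$ from one component to the next), extract a weak $H^1_{loc}$ limit $\theta$ on a nested exhaustion $K_1\subset\subset K_2\subset\subset\cdots\subset\subset (0,1)\setminus J'$ via diagonalization to produce \eqref{weakcon2}, and finally combine $\rho_{\e_\ell}\to 1$ in $L^2$, the Rellich-Kondrachov convergence $\theta_{\e_\ell}\to\theta$ in $L^2_{loc}$, and the uniform $L^\infty$ bound on $\rho_{\e_\ell}$ from the total variation estimate (cf.~\eqref{linfbound}) with smallness of shrinking neighborhoods of $J'$ to upgrade to \eqref{modcon2} with $w:=e^{i\theta}$. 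The principal obstacle is the $H^1$ bound for $\theta_\e$ in the second step, and it is exactly the scaling requirement $\beta<1/2$ that keeps $\floor{\e^{-\beta}}(\rho_\e^2-1)$ bounded in $L^2$ against the loss $4\e C_0$ from the Ginzburg-Landau part.
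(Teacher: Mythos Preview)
Your proposal is correct and follows essentially the same route as the paper. The paper first rewrites $F_\e(w_\e)$ by expanding the kinetic term to obtain $\int_0^1\frac{\e}{2}\abs{w_\e'}^2\,dx+O(\e^{1/2-\beta})$ and then observes that the only difference from Theorem~\ref{cpt} is the $\e$-dependent preferred twist $\abs{w_\e}^22\pi\floor{\e^{-\beta}}-2\pi\e^{-\beta}$, which it bounds in $L^2$ via the same splitting and the same use of $\beta<1/2$ that you employ; your variant is marginally more economical in that you bypass the full kinetic expansion and use only the pointwise lower bound $\abs{u_\e'}^2\geq(\rho_\e')^2$ to feed the Modica--Mortola estimate, which is all that step requires.
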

\begin{proof}[Proof of \Cref{cpt2}]
The proof is based on the proof of \Cref{cpt}. First, we estimate that
\begin{align}\notag
    \int_0^1 \frac{\e}{2}\abs{(w_\e e^{2\pi i \floor{\e^{-\beta}}x})'}^2 \,dx &= \int_0^1 \frac{\e}{2}\left|w_\e'+2\pi i w\floor{\e^{-\beta}}\right|^2\,dx \\
    &= \int_0^1 \frac{\e}{2}\left|w_\e' \right|^2\,dx + O(\e^{1/2-\beta})\,dx
\end{align}
for an energy bounded sequence $\{w_\e \}$. Therefore, 
\begin{align}\notag
F_\e(w_\e) = \int_0^1 \frac{\e}{2}\abs{w_\e '}^2&+\frac{1}{4\e}(\abs{w_\e}^2-1)^2 \\ \label{replace}&+
\frac{L}{2}(\mathcal{T}(w_\e)+|w_\e|^22\pi\floor{\e^{-\beta}}-2\pi \e^{-\beta})^2\,dx + O(\e^{1/2-\beta}).
\end{align}
The rest of the proof follows almost exactly as in \Cref{cpt}. Indeed, the only difference between $E_\e$ in that theorem and the right hand side of \eqref{replace} here is the preferred twist $2\pi N$ versus $|w_\e|^22\pi\floor{\e^{-\beta}}-2\pi \e^{-\beta}$, respectively. For the purpose of showing compactness, this distinction is immaterial, since it is only the uniform boundedness of the preferred twist $2\pi N$ in $L^2$ that was used in \eqref{h1} to obtain compactness. Using $\beta < 1/2$, we can estimate
\begin{align*}
   \left\| |w_\e|^22\pi\floor{\e^{-\beta}}-2\pi\e^{-\beta} \right\|_{L^2} &\leq \left\|(|w_\e|^2-1)2\pi\floor{\e^{-\beta}}\right\|_{L^2}+\left\|2\pi\floor{\e^{-\beta}} - 2\pi\e^{-\beta} \right\|_{L^2}, \\ 
   &\leq 2\pi  \left( \left\|(|w_\e|^2-1)\e^{-1/2}\right\|_{L^2}+1\right) \\ 
   & \leq 2\pi\left( 2\sqrt{C_0} + 1\right),
\end{align*}
so we are done.
\end{proof}
\begin{proof}[Proof of \Cref{gammablowup}]
We begin with the lower-semicontinuity condition. Let $w_\e \to w$ in $L^2$. We can assume that 
\begin{equation}\label{finitelim}
    \liminf_{\e \to 0} F_\e(w_\e) \leq C_0< \infty, 
\end{equation}
otherwise the lower-semicontinuity is trivial. The proof is similar to the proof of \eqref{lscgc} in \Cref{gconv}. Also, due to \eqref{replace}, it is enough to show that 
\begin{align}\notag
\liminf_{\e \to 0}&\int_0^1 \frac{\e}{2}\abs{w_\e'}^2+\frac{1}{4\e}(\abs{w_\e}^2-1)^2 +\frac{L}{2}(\mathcal{T}(w_\e)+|w_\e|^22\pi\floor{\e^{-\beta}}-2\pi \e^{-\beta})^2\,dx\\ &\geq E_{0,A}(w).\label{enough}
\end{align}
\par
First, for the twist term, it must be verified that under the assumption \eqref{finitelim},
\begin{align}\notag
    \liminf_{\e \to 0}\int_0^1 
\frac{L}{2}(\mathcal{T}(w_\e)+|w_\e|^22\pi&\floor{\e^{-\beta}}-2\pi \e^{-\beta})^2\,dx \\ \label{equallim}&\geq \frac{L}{2}\int_0^1\displaystyle (\mathcal{T}(w)-2\pi A)^2\,dx.
\end{align}
In \Cref{gconv}, after \eqref{thq'}, we proved the inequality
\begin{equation}\notag
    \int_K (1-2^{-q})^4(\theta_{\e_\ell}')^2 - 4\pi N(\rho_{\e_\ell})^2\theta_{\e_\ell}' + 4\pi^2N^2 \,dx \geq \int_K (1-2^{-q})^4(\theta')^2 - 4\pi N\theta' + 4\pi^2N^2 \,dx,
\end{equation}
where $K$ is a compact set on which $\theta_{\e_\ell}' \rightharpoonup \theta'$ and $\rho_{\e_\ell} \geq 1-2^{-q}$, followed by an exhaustion argument in $K$ and $q$ to prove lower-semicontinuity of the twist in \eqref{twistlsc'1}. The corresponding inequality to be verified in this case is 
\begin{align}\notag
    \int_K (1-2^{-q})^4(\theta_{\e_\ell}')^2 &+ 4\pi \left( |w_{e_\ell}|^2\floor{\e_\ell^{-\beta}}-\e_\ell^{-\beta}\right)|w_{\e_\ell}|^2\theta_{\e_\ell}'+4\pi^2 \left( |w_{e_\ell}|^2\floor{\e_\ell^{-\beta}}-\e_\ell^{-\beta}\right)^2 \,dx \\ \label{bigone} &\geq \int_K (1-2^{-q})^4(\theta')^2 - 4\pi A\theta' + 4\pi^2 A^2 \,dx,
\end{align}
which is the left-hand side of \eqref{equallim} expanded out and estimated using $|w_{\e_\ell}|\geq 1- 2^{-q}$ on $K$, on which $\theta_{\e_\ell}' \rightharpoonup \theta'$. The desired inequality \eqref{bigone} would follow immediately from the weak convergence of $\theta_{\e_\ell}'$ and the two conditions
\begin{equation}\label{small1}
\e_\ell^{-\beta} - |w_{\e_\ell}|^2\floor{\e_\ell^{-\beta}} \to A \textup{ in }L^2   
\end{equation}
and
\begin{equation}\label{small2}
    |w_{\e_\ell}|^2(\e_\ell^{-\beta} - |w_{\e_\ell}|^2\floor{\e_\ell^{-\beta}}) \to A \textup{ in }L^2,
\end{equation}
which we check in turn. First for \eqref{small1}, we estimate
\begin{align}\notag
    \left\|\e_\ell^{-\beta} - |w_{\e_\ell}|^2\floor{\e_\ell^{-\beta}}-A \right\|_{L^2} \leq \left\|\e_\ell^{-\beta} - \floor{\e_\ell^{-\beta}}-A \right\|_{L^2}+ \left\|(1-|w_{\e_\ell}|^2) \floor{\e_\ell^{-\beta}}\right\|_{L^2} .
\end{align}
The first term goes to zero as $\e \to 0$ due to \eqref{Aconv}, and the second vanishes due to the uniform energy bound \eqref{finitelim}, since $\beta < 1/2$. Moving on to \eqref{small2}, we can repeat the argument \eqref{linfbound} to find that $$\|w_{\e_\ell} \|_{L^\infty} \leq M(C_0).$$ The second condition \eqref{small2} can be shown as consequence of this $L^\infty$ bound, \eqref{small1}, and \eqref{finitelim} after writing
\begin{equation}\notag
    |w_{\e_\ell}|^2(\e_\ell^{-\beta} - |w_{\e_\ell}|^2\floor{\e_\ell^{-\beta}}) - A = |w_{\e_\ell}|^2(\e_\ell^{-\beta} - |w_{\e_\ell}|^2\floor{\e_\ell^{-\beta}} - A) + (|w_{\e_\ell}|^2-1)A.
\end{equation}
Choosing larger and larger $K$ which exhaust $(0,1)$ and letting $q \to \infty$ as in \Cref{gconv}, the proof of \eqref{equallim} is finished. The remainder of the lower-semicontinuity proof follows from the proof of \Cref{gconv} and \eqref{replace}. The recovery sequence is very similar to the proof of \Cref{gconv}, which is evident due to the similarity of \eqref{replace} with $E_\e$, so we omit the details. We only mention that on the set of size $O(\e)$ where $|w_\e|\neq 1$, the assumption $\beta < 1/2$ is needed to make sure the twist term vanishes in the limit $\e \to 0$.
\end{proof}
Finally, we identify the minimizers of $E_{0,A}$. As in \Cref{threepos}, this provides a description of all subsequential limits of a family of minimizers $\{u_\e \}$ for $F_\e$ and thus $\tilde{E}_\e$. We omit the proof since it follows the same strategy as the proof of \Cref{threepos}.
\begin{theorem}\label{blowupmins}
Let $N=N(A,\alpha)$ be the closest integer to $A-\frac{\alpha}{2\pi}$, so that $N \in \{-1,0,1 \}$. Then the global minimizer(s) of $E_{0,A}$ are given by
\begin{enumerate}[(i)]
\item the function 
\begin{equation}u(x)=e^{i(2\pi N+\alpha)x}\label{blowupnojumpmin}
\end{equation}
having constant twist and no jumps when 
\begin{equation}\label{blowupnojumpmincond}
L(2\pi(N-A)+\alpha)^2<\frac{4\sqrt{2}}{3} .
\end{equation}
\item the one-parameter set of functions given by
\begin{equation}
u(x)=\left\{\begin{matrix}e^{i2\pi Ax}&\;\mbox{if}\;x<x_0,\\
e^{i(2\pi Ax+\alpha-2\pi A)}&\;\mbox{if}\;x>x_0,
\end{matrix}
\right.\label{blowuponejump}
\end{equation}
for any $x_0\in (0,1)$, that have
 one jump and twist $2\pi A$ away from the jump, when
\begin{equation}\label{blowuponejumpcond}
L(2\pi(N-A)+\alpha)^2>\frac{4\sqrt{2}}{3} .
\end{equation}
\end{enumerate}
\end{theorem}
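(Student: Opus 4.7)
The plan is to mirror the strategy used in Theorem \ref{e0mins}, namely to separately minimize $E_{0,A}$ over competitors with $J=\emptyset$, competitors with exactly one jump, and competitors with two or more jumps, and then compare the resulting minimum values. Throughout, the key structural observation is that, because the modulus is constrained to $1$, the twist term $\mathcal{T}(w)=\theta'$ whenever we have a lifting $w=e^{i\theta}$ on a connected subinterval, so the analysis reduces to a piecewise constant-phase-rate optimization.

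First, in the no-jump case, $w\in H^1((0,1);S^1)$ with $w(0)=1$ and $w(1)=e^{i\alpha}$, the Euler-Lagrange equation for $\theta$ forces $\theta'$ to be constant, and the boundary conditions then force $\theta'=2\pi M+\alpha$ for some $M\in\mathbb{Z}$. Substitution yields energy $\tfrac{L}{2}(2\pi(M-A)+\alpha)^2$, which is minimized by taking $M$ to be the integer closest to $A-\alpha/(2\pi)$. Since $A\in[0,1]$ and $\alpha\in[0,2\pi)$, this integer $N$ lies in $\{-1,0,1\}$, giving the candidate \eqref{blowupnojumpmin} with energy $\tfrac{L}{2}(2\pi(N-A)+\alpha)^2$.

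Second, for one-jump competitors, pick any $x_0\in(0,1)$ and allow independent liftings on $(0,x_0)$ and $(x_0,1)$. On each subinterval criticality again forces $\theta'$ constant, and now we can simply choose $\theta'=2\pi A$ on both sides, which makes the twist term vanish identically. Matching the boundary conditions $\theta(0^+)=0$ and $\theta(1^-)=\alpha$ then yields exactly the family \eqref{blowuponejump}, whose total energy is just the single jump cost $\tfrac{2\sqrt{2}}{3}$. Any competitor with $\mathcal{H}^0(J)\ge 2$ already pays at least $\tfrac{4\sqrt{2}}{3}>\tfrac{2\sqrt{2}}{3}$, so it cannot be a global minimizer. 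Thus only the no-jump and one-jump classes compete, and comparing the two energies $\tfrac{L}{2}(2\pi(N-A)+\alpha)^2$ versus $\tfrac{2\sqrt{2}}{3}$ gives precisely the dichotomy \eqref{blowupnojumpmincond}/\eqref{blowuponejumpcond}.

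The only subtlety—and the main place where the proof differs from Theorem \ref{e0mins}—is the verification that $N$ defined as the closest integer to $A-\alpha/(2\pi)$ does in fact solve the integer minimization problem for the no-jump class, together with the observation that in the one-jump class we can actually attain zero twist (because $2\pi A$ need not be commensurate with the boundary data), a flexibility unavailable in Theorem \ref{e0mins} where the preferred twist $2\pi N$ was an integer multiple of $2\pi$ and the constant target already matched admissible liftings. This difference is what forces the one-jump minimum to simply be $\tfrac{2\sqrt{2}}{3}$ with no residual twist contribution, and once this observation is made the remainder of the comparison is routine. I would expect no substantive technical obstacle beyond bookkeeping the cases $N\in\{-1,0,1\}$ corresponding to whether $\alpha/(2\pi)$ is close to $A$, $A+1$, or $A-1$.
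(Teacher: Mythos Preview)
Your proposal is correct and follows exactly the strategy the paper intends: the paper omits the proof entirely, stating only that it ``follows the same strategy as the proof of \Cref{threepos}'' (really \Cref{e0mins}), and your outline reproduces that strategy faithfully---minimize separately over the no-jump, one-jump, and multi-jump classes, then compare.

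One minor expository point: your closing paragraph overstates the distinction from \Cref{e0mins}. In that theorem too, the one-jump minimizers achieve \emph{zero} twist energy (taking $\theta'=2\pi N$ on both subintervals), so the one-jump infimum there is also exactly $\tfrac{2\sqrt{2}}{3}$ with no residual twist. The only genuine difference here is that the optimal no-jump integer $M$ is determined by proximity to the real number $A-\alpha/(2\pi)$ rather than by a direct comparison of $\alpha$ with $\pi$, which is precisely the bookkeeping you identify. This does not affect the validity of your argument.
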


\section{Acknowledgments.}
DG acknowledges the support from NSF DMS-1729538. PS acknowledge the support from a Simons Collaboration grant 585520.
\bibliographystyle{acm}
\bibliography{GNS2}
\end{document}